\theoremstyle{plain}
\newtheorem{thm}{Theorem}[section]
\newtheorem*{prop*}{Proposition}
\newtheorem{lem}[thm]{Lemma}
\newtheorem*{lem*}{Lemma}
\newtheorem*{rem}{Remark}
\newcommand{\beq}{\begin{equation}}
\newcommand{\eeq}{\end{equation}}
\date{}
\title{\vspace{-0.7cm} {On the number of cliques in graphs with a forbidden minor}}
\author{
Jacob Fox\thanks{Department of Mathematics, Stanford University, Stanford, CA 94305. Email: {\tt jacobfox@stanford.edu}. Research supported by a Packard Fellowship, by NSF Career Award DMS-1352121 and by an Alfred P. Sloan Fellowship.}
\and
Fan Wei\thanks{Department of Mathematics, Stanford University, Stanford, CA 94305. Email: {\tt fanwei@stanford.edu}.}
}
\begin{document}

\maketitle

\begin{abstract}

Reed and Wood and independently Norine, Seymour, Thomas, and Wollan proved that for each positive integer $t$ there is a constant $c(t)$ such that every graph on $n$ vertices with no $K_t$-minor has at most $c(t)n$ cliques. Wood asked in 2007 if we can take  $c(t) = c^t$ for some absolute constant $c$. This question was recently answered affirmatively by Lee and Oum. In this paper, we determine the exponential constant. We prove that every graph on $n$ vertices with no $K_t$-minor has at most $3^{2t/3+o(t)}n$ cliques. This bound is tight for $n \geq 4t/3$. More generally, let $H$ be a connected graph on $t$ vertices, and $x$ denote the size (i.e., the number edges) of the largest matching in the complement of $H$. We prove that every graph on $n$ vertices with no $H$-minor has at most $\max(3^{2t/3-x/3+o(t)}n,2^{t+o(t)}n)$ cliques, and this bound is tight for $n \geq \max (4t/3-2x/3,t)$ by a simple construction. Even more generally, we determine explicitly the exponential constant for the maximum number of cliques an $n$-vertex graph can have in a minor-closed family of graphs which is closed under disjoint union. 
\end{abstract}

\section{Introduction} \label{sec:intro}

Tur\'an's theorem \cite{Tu} is a cornerstone result in extremal graph theory. It determines the maximum number of edges  a $K_t$-free graph on $n$ vertices can have, and it is given by the balanced complete $(t-1)$-partite graph. This was extended by Zykov \cite{Zy}, who determined that the same graph maximizes the number of cliques amongst $K_t$-free graphs on $n$ vertices. 

Analogous questions for minors have also been studied for a long time. A graph $H$ is a {\it minor} of a graph $G$ if it can be obtained from $G$ by contracting edges and deleting vertices and edges. Mader \cite{Ma,Ma1} proved that for each positive integer $t$ there is a constant $C(t)$ such that every graph on $n$ vertices with no $K_t$-minor has at most $C(t)n$ edges.  Kostochka \cite{Ko,Ko1} and Thomason \cite{Th} independently proved that $C(t)=\Theta(t\sqrt{\log t})$.  In particular, every graph with no $K_t$-minor has a vertex of degree $O(t\sqrt{\log t})$. Thomason \cite{Th} later proved that $C(t)=(\alpha+o(1))t\sqrt{\log t}$ where $\alpha=0.319...$ is an explicit constant.

Norine, Seymour, Thomas, and Wollan \cite{NSTW} and independently Reed and Wood \cite{RW} showed that for each $t$ there is a constant $c(t)$ such that every graph on $n$ vertices with no $K_t$-minor has at most $c(t)n$ cliques. Wood \cite{Wo} asked in 2007 if $c(t)<c^t$ for some absolute constant $c$. Progress on this question was made in \cite{FOT}, and it was recently resolved by Lee and Oum \cite{LO}. They proved that every graph on $n$ vertices with no $K_t$-subdivision (and hence  every graph on $n$ vertices with no $K_t$-minor) has at most $2^{5t+o(t)}n$ cliques, and observed that optimizing their proof would improve the exponential constant from $5$ to a number less than $4$. Wood \cite{Wo1} very recently determined, for $t \leq 9$ and $n \geq t-2$, that the maximum number of cliques a $K_t$-minor free graph on $n$ vertices can have is $2^{t-2}(n-t+3)$. He further conjectures that this bound holds if and only if $t \leq 49$. It may be surprising that there is a simple construction that is optimal up to such a large value of $t$ before another construction does significantly better. For $t$ large enough, a construction of a $K_t$-minor free graph on $n$ vertices with more cliques is the disjoint union of graphs on $2\lceil 2t/3 \rceil -2$ vertices which are the complement of a perfect matching. This graph has $3^{2t/3 - o(t)}n \geq 2^{1.0566t -o(t)}n$ cliques. 

It is not surprising that this natural question in extremal graph theory, bounding the number of cliques a graph on a given number of vertices with a forbidden minor can have, has several significant algorithmic applications. These include linear-time algorithms for testing first-order properties in sparse graphs \cite{DKT}, and in a fast algorithm for finding small separators in graphs with a forbidden minor \cite{RW}, which in turn has many further algorithmic applications. 

In this paper, we determine the exponential constant in Wood's problem. 

\begin{thm} \label{main} Every graph on $n$ vertices with no $K_t$-minor has at most $3^{2t/3+o(t)}n$ cliques. This bound is tight for $n \geq 4t/3$.
\end{thm}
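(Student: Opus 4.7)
The plan is to prove the upper bound $3^{2t/3+o(t)} n$ by induction on $n$, and to match it for $n \ge 4t/3$ using the disjoint-union construction highlighted in the introduction.

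For tightness, I would take the disjoint union of $\lfloor n/(2k)\rfloor$ copies of the cocktail party graph $K_{k\times 2}$ (the complement of a perfect matching on $2k$ vertices), with $2k = 2\lceil 2t/3\rceil - 2$, padding with isolated vertices if needed. Cliques of $K_{k\times 2}$ correspond to independent sets in the underlying matching (three options per matched pair: neither endpoint, or either of the two endpoints), giving exactly $3^k$ cliques per component. A short explicit branch-set construction shows that the Hadwiger number of $K_{k\times 2}$ is strictly less than $t$, so the disjoint union is $K_t$-minor-free. Totalling yields at least $3^{2t/3 - o(t)}\,n$ cliques whenever $n \ge 4t/3$.

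For the upper bound, I would induct on $n$. Let $f(G)$ denote the number of nonempty cliques of $G$. The Kostochka--Thomason theorem provides a vertex $v \in V(G)$ of degree at most $D = O(t\sqrt{\log t})$. Splitting cliques according to whether they contain $v$,
\[
f(G) \;=\; f(G-v) \;+\; 1 \;+\; f(G[N(v)]).
\]
The inductive hypothesis gives $f(G-v) \le 3^{2t/3+o(t)}(n-1)$, so the induction closes as long as $f(G[N(v)]) \le 3^{2t/3+o(t)}$. The subgraph $G[N(v)]$ is automatically $K_{t-1}$-minor-free: any $K_{t-1}$-minor in it, augmented by the branch set $\{v\}$ (which is adjacent to every other branch set since $v$ has a neighbor in each), would yield a $K_t$-minor in $G$.

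The main obstacle is therefore the Key Lemma: every $K_{t-1}$-minor-free graph on at most $D = O(t\sqrt{\log t})$ vertices has at most $3^{2t/3+o(t)}$ cliques. My plan here is a nested induction on the minor-freeness parameter $s$ together with the vertex count $m$. Whenever $m \le s-1$ the trivial estimate $2^{s-1}$ is already comfortably below $3^{2s/3}$. Otherwise I would invoke Kostochka--Thomason again inside the current graph and descend from $K_s$-minor-free to $K_{s-1}$-minor-free at a multiplicative cost equal to the current vertex count. A direct product bound gives roughly $D^{t}$, which is much too weak, so the crux of the argument will be to show that only a bounded number of descent layers can contribute a cocktail-party-like factor near $3^{2/3}$ per vertex. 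This would be driven either by tracking how rapidly the vertex count contracts under iterated neighborhood restriction, or by a stability argument pinning down the cocktail party graph as essentially the unique near-extremizer. Matching the exponent $2t/3$ exactly, rather than something like $t\log t$, is where the real work lies.
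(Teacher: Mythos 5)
Your tightness construction is fine and matches the paper's. The problem is the upper bound: your induction framework (remove a minimum-degree vertex $v$, use $f(G)=f(G-v)+1+f(G[N(v)])$, note that $G[N(v)]$ is $K_{t-1}$-minor-free on $O(t\sqrt{\log t})$ vertices) is essentially the same "peeling" recursion the paper starts from, but it only \emph{transfers} the difficulty into your Key Lemma, and you explicitly leave that lemma unproved --- you say yourself that matching the exponent $2t/3$ "is where the real work lies," and the two ideas you float (tracking how fast neighborhoods shrink, or a stability argument for the cocktail party graph) are not developed into an argument. Note also that iterated neighborhood restriction cannot by itself beat the trivial product bound: in the extremal example (complement of a perfect matching on about $4t/3$ vertices) each neighborhood step removes only two vertices while contributing a factor close to $3^{2/3}$, so to rule out anything better you need genuine structural information about dense minor-free graphs, which your sketch does not supply. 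As written, the proposal proves the lower bound but not the theorem.

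What the paper does at exactly this point is a density dichotomy inside the peeling process, plus two lemmas about the dense regime. One peels until either only about $1.05t$ vertices remain (then the trivial $2^{1.05t}<3^{2t/3}$ bound suffices), or each peeling step stops deleting many vertices, which forces the remaining graph $G_r$ to be very dense: its complement has maximum degree $\Delta=O(n_r^{1/2})$. Because each non-terminal peeling step deletes at least $\Omega(n_i^{1/2})$ vertices, a clique meets the peeled-off part in only $\widetilde{O}(t^{1/2})$ vertices, contributing a factor $2^{o(t)}$ per starting vertex, i.e.\ $n\,2^{o(t)}$ overall. For the dense remainder, the paper proves (i) that a graph with $n\geq\omega+2\Delta^2+2$ has Hadwiger number exactly $\lfloor (n+\omega)/2\rfloor$ --- one takes a maximum clique and, via Dirac's theorem applied to an auxiliary graph, pairs the remaining vertices into adjacent dominating pairs to serve as branch sets --- so $K_t$-minor-freeness of $G_r$ forces $n_r+\omega(G_r)<2t$; and (ii) by an easy induction on $s=n+\omega$, any graph satisfies $c(G)\leq 3^{s/3}$, with the complement of a perfect matching extremal. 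Combining gives at most $3^{2t/3}$ cliques inside $G_r$, hence $3^{2t/3+o(t)}n$ in total. These two lemmas (the Hadwiger number of very dense graphs and the $3^{(n+\omega)/3}$ clique bound) are precisely the missing content of your Key Lemma; without something of this kind your induction does not close.
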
 

The proof of Theorem \ref{main} is given in Section \ref{minorsec} and has a few ingredients. In Subsection \ref{cliquenumber}, we prove a lemma which shows that for every graph of very large minimum degree, the order of the largest clique minor is about the average of its number of vertices and the order of its largest clique. In Subsection \ref{cliquenumber2}, we give a tight upper bound on the number of cliques in a graph with given sum of the number of vertices and the order of its largest clique. We use these lemmas to prove Theorem \ref{main} in Subsection \ref{proofofmain}.

Our tools also allow us to estimate the maximum possible number of cliques in the case $n<4t/3$. Of course, when $n<t$, any graph with $n$ vertices has no $K_t$-minor and hence the maximum possible number of cliques is just $2^{n}$ in this case, when the graph is a clique. When $t \leq n < 4t/3$, the maximum number of cliques a $K_t$-minor free graph on $n$ vertices can have is $2^{4t-3n+2(n-t)\log_2 3 + o(t)}$. Tightness for $n \leq \frac{4t-2}{3}$ is given by the graph on $n$ vertices which is the complement of a matching of size $x=2(n-t)+1$. It is straightforward to check that this graph has no $K_t$-minor and has $3^x2^{n-2x}$ cliques, giving the lower bound. The proof of the upper bound is given in Theorem \ref{main7}. 

With some alternative tools, including probabilistic and linear programming methods, we generalize Theorem \ref{main} and prove the following theorem which determines the exponential constant for the maximum number of cliques a graph on $n$ vertices with no $H$-minor can have. For a graph $H$, let $x(H)$ denote the number of edges of the largest matching in the complement of $H$. We also sometimes refer to this as the maximum missing matching size in $H$. 

\begin{thm}\label{generalmain}
Let $H$ be a connected graph on $t$ vertices and $x=x(H)$. Every graph on $n$ vertices with no $H$-minor has at most $\max (3^{2t/3- x/3+o(t)}n, 2^{t + o(t)}n )$ cliques. This is tight for $n \geq \max (4t/3-2x/3,t)$.
\end{thm}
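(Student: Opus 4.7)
Two constructions realize the two terms in the maximum. For $n \geq t$, the disjoint union of $\lfloor n/(t-1)\rfloor$ copies of $K_{t-1}$ (with remaining vertices isolated) is $H$-minor free, since $H$ is connected on $t$ vertices while each component has only $t-1$ vertices, and contributes $2^{t+o(t)}n$ cliques. For $n \geq 4t/3 - 2x/3$, disjoint copies of the graph $G^* := \overline{M}$, where $M$ is a perfect matching on $m := 2\lceil 2t/3 - x/3 \rceil - 2$ vertices, contribute $3^{m/2} = 3^{2t/3 - x/3 + O(1)}$ cliques per copy, hence $3^{2t/3 - x/3 + o(t)}n$ in total. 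To verify that $G^*$ has no $H$-minor, suppose one exists with branch sets $V_1, \ldots, V_t$; let $S \subseteq [t]$ index the singleton branch sets and $s = |S|$. A vertex count gives $s \geq 2t - m$. Two singletons joined by an $H$-edge must correspond to non-matched vertices of $G^*$, so the matched pairs among singletons form a matching in $\overline{H[S]}$ of size at most $x(H) = x$. Counting matching pairs of $G^*$ touching at least one singleton yields $s \leq m/2 + x$; combining, $m \geq 4t/3 - 2x/3$, contradicting the choice of $m$.

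\textbf{Upper bound.} The plan is to prove each of the two upper bounds separately; their maximum gives the theorem. Following the strategy for Theorem~\ref{main}, I would iteratively remove a vertex $v$ of small degree $d$ (contributing at most $2^d$ cliques to the count) until the minimum degree exceeds a threshold $D$. Choosing $D = (2t/3 - x/3)\log_2 3 + o(t)$ absorbs each removal into the $3^{2t/3 - x/3 + o(t)}n$ bound; choosing $D = t + o(t)$ absorbs into the $2^{t+o(t)}n$ bound. In the terminal stage (minimum degree $\geq D$), the clique-count lemma of Subsection~\ref{cliquenumber2} bounds the number of cliques by $3^{(n+\omega)/3 + o(t)}$, so it suffices to show $n + \omega \leq 2t - x + o(t)$ in this regime. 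This strengthens the $K_t$-minor-free bound $n + \omega \leq 2t + o(t)$ by the savings of $x$ coming from the missing matching in $\overline{H}$.

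\textbf{Heart of the $-x$ improvement.} To establish $n + \omega \leq 2t - x + o(t)$ in $H$-minor-free graphs with large minimum degree, I would construct an $H$-minor whenever $n + \omega > 2t - x$. Use a maximum clique of size $\omega$ to provide $\omega$ singleton branch sets, and randomly pair the remaining $n - \omega$ vertices into $(n-\omega)/2$ two-element branch sets. With large minimum degree almost every pair of branch sets is joined by an edge, and the few non-adjacent pairs can, via local swaps or a second random step, be arranged to form a matching of size at most $x$. Aligning this matching with the $x$ missing edges of $H$ (which form a matching by definition of $x(H)$) yields an $H$-minor. A linear program optimizes over the branch-set size profile (singletons, pairs, possibly larger) and the distribution of non-adjacencies, pinning down the constant $2t - x$ as the LP optimum.

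\textbf{Main obstacle.} The trickiest step is the probabilistic construction: controlling the non-adjacent pairs of random branch sets so that they map onto a matching in $\overline{H}$ of size at most $x$. This needs sharp concentration (Azuma or Talagrand), possibly a two-stage random pairing with swap corrections, and an LP-duality argument to certify that $2t - x$ is tight. Handling the smooth transition between the two regimes (determined by whether $x$ is smaller or larger than approximately $0.107\,t$), and the boundary case when $\omega$ is close to $t-1$ (where the $2^{t+o(t)}n$ construction becomes optimal), also requires care.
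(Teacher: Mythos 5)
Your lower-bound constructions and the verification that the complement of a perfect matching on $2\lceil 2t/3-x/3\rceil-2$ vertices has no $H$-minor are fine. The upper bound, however, has a genuine gap already at the reduction step. Peeling until the minimum degree exceeds an absolute threshold $D\approx(\log_2 3)(2t/3-x/3)$ (or $t$) does not leave a graph to which a ``$n+\omega$'' count can be applied: an $H$-minor-free graph can have minimum degree above $D$ and arbitrarily many vertices. For instance, the disjoint union of complements of perfect matchings on just under $4t/3-2x/3$ vertices has minimum degree about $2(2t/3-x/3)-2>D$, and a connected pseudorandom graph on $10t$ vertices of suitable constant density has minimum degree above $D$, clique number $O(\log t)$, and (for large $t$) Hadwiger number below $t-2x$, hence no $H$-minor, while $n+\omega\approx 10t$. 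So the statement you say ``it suffices to show'' --- that $n+\omega\le 2t-x+o(t)$ whenever the minimum degree is at least $D$ --- is false. The paper's peeling is bookkept per clique and stops only when either at most $0.99t$ vertices remain or the complement of the current graph has maximum degree at most $n_r^{1/4}$; it is this near-completeness, not a minimum-degree threshold, that the subsequent analysis requires.

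Even restricted to genuinely dense graphs (complement of maximum degree $\Delta$ small), your central claim $n+\omega\le 2t-x+o(t)$ is still false, so no ``heart of the $-x$ improvement'' lemma of that form can exist. The clique $K_{t-1}$, or more generally the complement of a matching of size $a<x$ on $t-1$ vertices, is $H$-minor-free, has missing degree at most one, and has $n+\omega\ge 2t-2-a$, which exceeds $2t-x+o(t)$ whenever $x=\Omega(t)$ --- including values of $x$ below the $(2-3/\log 3)t\approx 0.107t$ threshold where you want the $3^{2t/3-x/3+o(t)}$ bound. These graphs do satisfy the theorem (they have at most $2^t$ cliques), but the route through Lemma \ref{mainlem2} overshoots for near-cliques, since $3^{(n+\omega)/3}\gg 2^n$ there. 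The correct dichotomy is not ``$x$ small versus large'' but whether the dense host graph itself contains a missing matching of size at least $x$ (forcing $1.5a+b<t-0.5x$, i.e.\ $n+\omega<2t-x$) or smaller than $x$ (forcing only $2a+b<t$, i.e.\ $n<t$); this is exactly the two-constraint program of Lemmas \ref{notminor} and \ref{twocasesclique}, and it is also why your opening plan to ``prove each of the two upper bounds separately'' cannot work --- that would establish the minimum of the two bounds, not the maximum. Note also that the paper's dense-case argument does not go through a Hadwiger-number lemma at all: it contracts to a clique-maximizing minor (a social graph) and shows via Lemmas \ref{contract} and \ref{nondeg} that all but $O(\Delta^3)$ of its vertices have missing degree at most one, after which the $(a,b)$ optimization finishes. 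Your probabilistic pairing sketch, besides being aimed at a false inequality, supplies no substitute for this structural step, so the core of the upper-bound proof is missing.
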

Theorem \ref{generalmain}, when $x \leq (2-3/\log 3)t$, claims that every graph on $n \geq 4t/3-2x/3$ vertices with no $H$-minor has at most $3^{2t/3- x/3+o(t)}n$ cliques. This bound is tight by considering a disjoint union of copies of the graph which is the complement of a perfect matching on $2\lceil 2t/3-x/3\rceil -2$ vertices. Theorem \ref{generalmain}, when $x >  (2-3/\log 3)t$, claims that every graph on $n$ vertices with no $H$-minor has at most $2^{t+o(t)}n$ cliques. The exponent is sharp by the construction of a graph of $n$ vertices which is the disjoint union of cliques on $t-1$ vertices with possibly one clique on fewer vertices. 

We further extend this result from a graph with a single forbidden minor to a graph in a minor-closed family of graphs. Recall the celebrated Robertson-Seymour Graph Minor Theorem \cite{RS}, which states that for any minor-closed family $\mathcal{G}$ of graphs, the set $\mathcal{H}(\mathcal{G})$ of minimal forbidden minors is finite. 
We want to bound the number of cliques a graph in $\mathcal{G}$ on $n$ vertices can have. It is easy to check that a minor-closed family $\mathcal{G}$ is closed under disjoint union if and only if the minimal forbidden minors are all connected graphs.

\begin{thm}\label{family}
Let $\mathcal{G}$ be a minor-closed family of graphs which is closed under disjoint union, and $\mathcal{H}(\mathcal{G}) = \{ H_1, H_2, \dots, H_{\ell}\}$ be the set of minimal forbidden minors. Suppose for $1 \leq i \leq \ell$ that  $H_i$ has $t_i$ vertices and its maximum missing matching size is $x_i$. Assume $t_1 \leq t_2 \leq \cdots \leq t_{\ell}$.  
Then the number of cliques a graph in $\mathcal{G}$ on $n$ vertices can have is at most
\beq 
\max_{(a_j, b_j)~\text{an extreme point of the lower envelope $\mathcal{P}$}} 2^{(\log_2 3)a_j +b_j + o(t_1)}n,
\eeq
where the lower envelope $\mathcal{P}$ is defined in Section \ref{secfamily} and the extreme points of $\mathcal{P}$ are easy to compute and there are only a finite number of them. 
The lower order term $o(t_1)$ tends to $0$ as $t_1 \to \infty$. This bound is tight for $n \geq \frac{4}{3}t_1$. 
\end{thm}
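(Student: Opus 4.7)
The plan is to follow the same template as Theorems \ref{main} and \ref{generalmain}, but to replace the single linear constraint coming from one forbidden minor by a system of $\ell$ constraints, one per minimal forbidden minor $H_i$, and then optimize over the resulting feasible polytope $\mathcal{P}$ by linear programming. First I would reduce to the case of very high minimum degree: deleting a vertex of degree at most $d = o(t_1)$ changes the clique count by a factor of at most $2^d = 2^{o(t_1)}$, so after iterating and absorbing these factors into the $o(t_1)$ error term, we may assume the remaining graph $G'$ has $m$ vertices, clique number $\omega$, and minimum degree at least $d$.

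Next, for each $i$, I would convert the hypothesis ``$G'$ has no $H_i$-minor'' into a linear inequality in $(m,\omega)$. The clique-minor lemma of Subsection \ref{cliquenumber} guarantees that such a $G'$ contains, as a minor, a ``$K_s$ minus a matching'' whose order $s$ and matching size are determined, up to $o(t_1)$, by $m$ and $\omega$ (morally $s \approx (m+\omega)/2$ with matching of size $\approx (m-\omega)/2$). Since $K_{t_i}$ with any $x_i$-matching removed still contains $H_i$ as a subgraph, avoidance of $H_i$ forces this found minor to be too small to host $H_i$; this yields one linear half-plane per $i$. Reparametrizing so that the tight upper bound of Subsection \ref{cliquenumber2} on the number of cliques in a graph on $m$ vertices with clique number $\omega$ takes the form $2^{(\log_2 3)a + b + o(t_1)}$, the polytope $\mathcal{P}$ is the intersection of these half-planes together with the non-negativity constraints. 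Maximizing the linear functional $(\log_2 3)a + b$ over $\mathcal{P}$ is attained at one of its finitely many extreme points $(a_j, b_j)$, which, since $\mathcal{P}$ is cut out by $O(\ell)$ inequalities, are easy to compute from the parameters $(t_i, x_i)$; this yields the claimed upper bound $2^{(\log_2 3) a_j + b_j + o(t_1)} n$.

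Finally, to establish tightness, for each extreme point $(a_j,b_j)$ attaining the maximum I would construct a graph realizing it as a disjoint union of ``complement of a matching inside a clique'' gadgets, with the gadget's vertex count $s$ and matching size $x$ tuned so that its per-vertex clique density is $2^{(\log_2 3)a_j + b_j}$ to leading order; the hypothesis $n \geq \tfrac{4}{3}t_1$ ensures at least one full gadget fits. The main obstacle is the last verification: at an extreme point only one or two of the $\ell$ inequalities are tight, and we must check that the chosen gadget additionally satisfies every \emph{non-binding} constraint, i.e.\ avoids every other $H_j$ in $\mathcal{H}(\mathcal{G})$. Because the gadgets are so structured, their minors are controlled by the single pair $(s,x)$ of vertex count and matching size, and the non-binding constraints translate into the required slack; a finite case analysis over $\mathcal{H}(\mathcal{G})$ then completes the matching lower bound.
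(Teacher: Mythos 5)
Your LP framework and tightness construction match the paper, but the core of your upper bound has a genuine gap: you reduce the dense remainder to the two statistics $(m,\omega)$ and invoke the bound of Subsection \ref{cliquenumber2}, asserting that after reparametrization it ``takes the form $2^{(\log_2 3)a+b+o(t_1)}$''. It does not. Lemma \ref{mainlem2} bounds the number of cliques only in terms of $m+\omega$, namely by $3^{(m+\omega)/3}$; writing $m=2a+b$, $\omega=a+b$, this is $2^{(\log_2 3)(a+2b/3)}$, which exceeds $3^a2^b=2^{(\log_2 3)a+b}$ by a factor $2^{(\frac{2}{3}\log_2 3-1)b}=2^{\Omega(b)}$. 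In the multi-minor setting the LP optimum is typically attained at an extreme point with $b_j=\Theta(t_1)$, and then your bound is exponentially off. Concretely, take the paper's example at the end of Section \ref{secfamily}: $H_1=K_{4t}$ and $H_2$ the complement of a perfect matching on $5t$ vertices. The complement of a matching of size $2t-1$ joined completely to $K_{t+1}$ avoids both minors, has $m=5t-1$, $\omega=3t$, and $3^{2t-1}2^{t+1}=2^{(2\log_2 3+1)t+O(1)}\approx 2^{4.17t}$ cliques (exactly the theorem's bound); but since this graph is itself in $\mathcal{G}$, any constraints on $(m,\omega)$ you derive from minor-freeness must admit the point $(5t-1,3t)$, and Lemma \ref{mainlem2} then only yields $3^{(8t-1)/3}\approx 2^{4.23t}$, weaker than the statement by $2^{\Omega(t)}$. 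This is precisely why the paper does not generalize the proof of Theorem \ref{main} here: it instead passes to the minor of the dense graph with the most cliques (a social graph) and proves the structure theorem (Lemmas \ref{contract}--\ref{nondeg}) that such a graph is, up to $O(\Delta^3)$ vertices, exactly a complement of a matching joined to a clique, so the count really is $3^a2^b$; Lemma \ref{notminor} then converts $H_i$-minor-freeness into the linear constraints. Your proposal has no substitute for this structure theorem. (Relatedly, your claim that the lemma of Subsection \ref{cliquenumber} produces a ``$K_s$ minus a matching'' minor with $s\approx(m+\omega)/2$ and matching size $\approx(m-\omega)/2$ is not what Lemma \ref{mainlem1} proves --- it produces a clique minor --- and no proof of the stronger statement is sketched.)

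A secondary but real problem is your opening reduction. A graph in $\mathcal{G}$ need not contain any vertex of degree $o(t_1)$ (minimum degrees can be of order $t_1\sqrt{\log t_1}$), so peeling vertices of degree at most $d=o(t_1)$ terminates with a graph whose minimum degree merely exceeds $o(t_1)$; this is nowhere near the small-missing-degree regime needed by Lemma \ref{mainlem1} or Lemma \ref{nondeg}, which require the complement's maximum degree $\Delta$ to be tiny (e.g.\ $\Delta^2$ small compared to $m-\omega$). Also the cost of deleting a vertex is additive ($2^{\deg v}$ cliques), not a multiplicative factor. The paper's peeling process of Subsection \ref{proofofmain} is designed exactly to circumvent this: it orders the vertices of each clique, stops when $n_{r+1}\geq n_r-n_r^{1/4}$ (forcing $\Delta\leq n_r^{1/4}$ in $G_r$), and charges the peeled prefix to the at most $2^{o(t_1)}n$ choices for those initial vertices. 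Your tightness paragraph is essentially right, though the ``finite case analysis over $\mathcal{H}(\mathcal{G})$'' is unnecessary: any lattice point in the feasible region satisfies all $\ell$ constraints simultaneously, so the corresponding gadget avoids every $H_j$ by Lemma \ref{notminor}.
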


Tightness for Theorem \ref{family} comes from the disjoint union of copies of a graph which is a complement of a (not necessarily perfect) matching, i.e., of a graph whose complement has maximum degree at most one. 

\vspace{0.2cm}

\noindent {\bf Organization:} In the next section we prove Theorem \ref{main}. We use different tools to prove the generalization Theorem \ref{generalmain} in Section \ref{section3}, and we further generalize this result to Theorem \ref{family} in Section \ref{secfamily}.  We finish with some concluding remarks. For the sake of clarity of presentation, we sometimes omit floor and ceiling signs. Throughout the paper, unless specified otherwise, all logarithms are base two.  We also do not make any serious attempts to optimize the $o(t)$ contribution that appear in the exponents.

\section{Counting cliques in graphs with no $K_t$-minor}\label{minorsec}

In the first subsection, we prove a lemma showing that the order of the largest clique minor in a very dense graph is the greatest integer which is at most the average of the number of vertices and the order of its largest clique. In the following subsection, we prove that the maximum number of cliques in a graph given the sum of the number of vertices and the order of its largest clique is obtained by the complement of a perfect matching. We finish with a subsection proving Theorem \ref{main} by a reduction to the very dense case and using the lemmas proved in the first two subsections. 

\subsection{Hadwiger number of very dense graphs}\label{cliquenumber}

The Hadwiger number $h(G)$ of a graph $G$ is the order of the largest clique minor in $G$. The clique number $\omega(G)$ is the order the largest clique in $G$. Let $G$ be a graph on $n$ vertices with clique number $\omega$ and Hadwiger number $h$. A simple upper bound on the Hadwiger number is $h \leq \lfloor \frac{n+\omega}{2}\rfloor$.  Indeed, if we form a clique minor of size $h$ by contracting sets, and we use $n_i$ sets of size $i$, then the $n_1$ vertices form a clique, $h = \sum_i n_i$, and the number of vertices used is $\sum_i in_i$, which is at most $n$. Hence, $2h \leq n+n_1$, from which we get $h \leq (n+n_1)/2 \leq (n+\omega)/2$. As $h$ is an integer,  we have $h \leq \lfloor \frac{n+\omega}{2} \rfloor$. The next lemma shows that the Hadwiger number matches this upper bound if the graph has large minimum degree and does not have a nearly spanning clique. 

\begin{lem}\label{mainlem1}
Let $G$ be a graph on $n$ vertices with minimum degree $\delta$ and clique number $\omega$. Let $\Delta=n-\delta-1$, which is the maximum degree of the complement of $G$. If $n \geq \omega+2\Delta^2+2$, then $h(G)=\lfloor \frac{n+\omega}{2}\rfloor$.
\end{lem}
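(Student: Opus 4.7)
The upper bound $h(G)\le\lfloor(n+\omega)/2\rfloor$ is already established in the paragraph immediately before the statement, so the task is to build a $K_h$-minor of $G$ with $h=\lfloor(n+\omega)/2\rfloor$. I would fix a maximum clique $K\subseteq V(G)$ with $|K|=\omega$ and set $U:=V(G)\setminus K$, so $|U|=n-\omega\ge 2\Delta^2+2$. The construction uses the $\omega$ vertices of $K$ as singleton branch sets and covers $U$ (all but one vertex if $n-\omega$ is odd) by a matching $M$ of $G[U]$ whose edges serve as the size-two branch sets. For this collection to form a $K_h$-minor, the matching $M$ must satisfy: (a)~every $w\in K$ has a $G$-neighbor in every edge of $M$; and (b)~no two edges of $M$ span a $K_{2,2}$ in the complement $\bar G$.

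Condition (a) reduces to a Dirac-type statement. Because $K$ is maximum, every $u\in U$ has at least one $\bar G$-neighbor in $K$; writing $K_u:=N_{\bar G}(u)\cap K$ we have $1\le|K_u|\le\Delta$, and (a) for the pair $\{u,v\}$ is exactly $K_u\cap K_v=\emptyset$. Define the auxiliary graph $H$ on $U$ whose edges are the pairs $\{u,v\}$ with $uv\in E(G)$ and $K_u\cap K_v=\emptyset$. The non-$H$-neighbors of a fixed $v\in U$ lie in $(N_{\bar G}(v)\cap U)\cup\bigcup_{w\in K_v}(N_{\bar G}(w)\cap U)$, of total size at most $(\Delta-|K_v|)+|K_v|\Delta-1\le \Delta^2-1$, giving $\delta(H)\ge n-\omega-\Delta^2\ge(n-\omega)/2$ under the hypothesis. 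Dirac's theorem then produces a Hamilton cycle of $H$ and in particular a (near-)perfect matching that automatically satisfies~(a).

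The main step, and where I expect the difficulty to lie, is upgrading this to a matching that also satisfies~(b). My plan is a minimum-violation argument: take $M$ to be a matching of $H$ of the given size that minimises the number of bad pairs $(e_1,e_2)$ whose four endpoints span a $K_{2,2}$ in $\bar G$, and aim to reduce a positive minimum by a three-edge rotation. Given a bad pair $e_1=\{a,b\},\,e_2=\{c,d\}$, the intended swap replaces $e_1$ and a third edge $e_3=\{p,q\}\in M$ by $\{a,p\}$ and $\{b,q\}$; one must find $e_3$ whose endpoints avoid the $\bar G$-neighborhoods of $a,b,c,d$ (so the new edges are $G$-edges and $(e_2,\{a,p\})$ is not bad), the (a)-forbidden sets $\bigcup_{w\in K_a\cup K_b}N_{\bar G}(w)$, and the sets $N_{\bar G}(x)\cap N_{\bar G}(y)$ for every $\{x,y\}\in M$ with $\{x,y\}$ inside a relevant $\bar G$-neighborhood (so no new bad pair is created). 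A $\bar G$-degree count shows the total number of forbidden vertices is $O(\Delta^2)$, and since $|M|\ge(n-\omega)/2\ge\Delta^2+1$ by hypothesis, a valid $e_3$ should exist. The main obstacle I foresee is that the forbidden vertex count in this argument is essentially $\Delta^2$ up to lower-order terms while $|M|$ is only just above $\Delta^2$: to close the gap with the exact constant $2\Delta^2+2$ in the hypothesis, one must carefully exploit both that $|K_v|$ and $|N_{\bar G}(v)\cap U|$ share the $\Delta$-budget and that the roles of $p$ and $q$ can be interchanged. If a single rotation cannot be pushed through in some extremal configuration, the back-up plan is a two-step rotation (or a four-edge swap), or a probabilistic alteration that absorbs at most one residual conflict into a single size-three branch set, whose cost is absorbed by the floor in $\lfloor(n+\omega)/2\rfloor$.
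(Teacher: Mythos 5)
Your reduction and your treatment of condition (a) are sound: the auxiliary graph $H$ on $U$ (edges $uv\in E(G)$ with $K_u\cap K_v=\emptyset$), the missing-degree count of at most $\Delta^2-1$, and Dirac's theorem do produce a near-perfect matching of $G[U]$ every edge of which dominates $K$. But the proof is not complete, because condition (b) is exactly where you stop proving and start planning. The minimum-violation rotation is only sketched, and by your own accounting it does not close under the stated hypothesis: the forbidden set for the third edge $e_3$ already has size of order $\Delta^2$ (the sets $\bigcup_{w\in K_a\cup K_b}N_{\bar G}(w)$ alone can have size about $2\Delta^2$, before adding the sets needed to avoid creating new bad pairs), while the matching has only about $(n-\omega)/2\geq \Delta^2+1$ edges, so with the exact constant $2\Delta^2+2$ there is no slack. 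One would also have to verify that a rotation never increases the number of bad pairs involving the untouched edges and that (a) is preserved for both new edges; the fallback options (double rotations, a single size-three branch set) are declared, not proved. As written, the statement is therefore not established.

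The gap disappears if you strengthen the defining condition of your auxiliary graph: instead of asking only that $uv\in E(G)$ and $K_u\cap K_v=\emptyset$, ask that $\{u,v\}$ be an edge of $G$ which is a dominating set of all of $G$ --- equivalently, that $u$ and $v$ be at distance at least three in $\bar G$. A matching into dominating pairs satisfies (a) and (b) simultaneously: if $\{a,b\}$ dominates $G$, then each of $c,d$ has a $G$-neighbor in $\{a,b\}$, so two matching edges can never span a $K_{2,2}$ in $\bar G$, and every vertex of $K$ has a neighbor in $\{a,b\}$ as well. The degree count barely changes: the vertices within distance two of $v$ in $\bar G$ number at most $\Delta+\Delta(\Delta-1)=\Delta^2$, so this auxiliary graph restricted to $U$ has minimum degree at least $(n-\omega)-\Delta^2-1\geq (n-\omega)/2$ precisely when $n\geq \omega+2\Delta^2+2$, and Dirac's theorem yields the required (near-)perfect matching with no rotation argument at all. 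This is exactly the paper's proof; your version recovers its first half but replaces its one decisive idea (dominating pairs) with an unfinished and, at this constant, unworkable local-swap scheme.
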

\begin{proof}

As the upper bound was established before the lemma, it suffices to produce a clique minor of order $\lfloor \frac{n+\omega}{2}\rfloor$.  We will build a clique minor by using a maximum clique $K$ and carefully pairing up all the remaining vertices (apart from possibly one vertex due to parity) to create the clique minor. We will do a little more, as the perfect matching in $V(G) \setminus K$ (we later show the perfect matching exists) we use for the pairs are such that each edge in the perfect matching is a dominating set in $G$. 

We make an auxiliary graph $A$ on $V(G)$, where a pair $u,v$ of vertices are adjacent in $A$ if they are adjacent in $G$ and form a dominating set, i.e., every vertex of $G$ is adjacent to $u$ or $v$. Note that the edges of $A$ are precisely those pairs of vertices of distance at least three in the complement of $G$. Succintly, $A$ is the complement of the graph which is the square of the complement of $G$. As the complement of $G$ has maximum degree $\Delta$, the complement of $A$ has maximum degree at most $\Delta+\Delta(\Delta-1)=\Delta^2$. Let $n'=n-\omega$, which is the number of vertices of $G$ not in $K$. Recall Dirac's theorem \cite{Dir} that every graph with minimum degree at least half the number of vertices has a hamiltonian cycle and hence a perfect matching (apart from one vertex, if the number of vertices is odd). Therefore, if the minimum degree of the induced subgraph $A\setminus K$ of $A$ on $V(G) \setminus K$ is at least $n'/2$, then, $A\setminus K$ has a perfect matching (apart from possible a single vertex due to parity), which would complete the proof. So it suffices to check that $n'-\Delta^2-1$, which is a lower bound on the minimum degree of $A\setminus K$, is at least $n'/2$. This inequality is equivalent to the condition in the lemma statement.

\end{proof}

\subsection{On the number of cliques and the clique number}\label{cliquenumber2}

For a graph $G$, let $c(G)$ denote the number of cliques in $G$, $n(G)$ denote the number of
vertices of $G$, and $\omega(G)$ be the clique number of $G$. Define $k(s)$ to be the maximum of $c(G)$ over all graphs $G$ with $n(G)+\omega(G) \leq s$. 

\begin{lem}\label{mainlem2} We have $k(s) \leq 3^{s/3}$, with equality when $s$ is a multiple of $3$
and $G$ is the complement of a perfect matching. 
\end{lem}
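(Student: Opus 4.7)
My plan is to apply a Zykov-style symmetrization to reduce to the case when $G$ is a complete multipartite graph, and then conclude by an elementary product inequality. Throughout, the quantity $n(G)+\omega(G)$ does not increase and $c(G)$ does not decrease, so a bound on the reduced graph transfers back to $G$.

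For the reduction, I would take any non-adjacent pair $u,v$ with $c(G[N(u)])\ge c(G[N(v)])$ and form $G'$ by setting $N_{G'}(v):=N_G(u)$ while keeping $uv$ non-adjacent and leaving all other adjacencies intact. A short verification shows $n(G')=n(G)$; that $\omega(G')\le\omega(G)$, because any clique of $G'$ through $v$ becomes, upon swapping $v$ for $u$, a clique of the same size in $G$; and that $c(G')-c(G)=c(G[N(u)])-c(G[N(v)])\ge 0$, obtained by partitioning cliques according to whether they contain $u$, contain $v$, or contain neither (noting $u$'s own neighborhood is unchanged). Choosing an iterate that maximizes $c$ and then minimizes the number of non-edges forces non-adjacency to become an equivalence relation, so we may assume $G=K_{n_1,\dots,n_r}$ is complete multipartite with $r=\omega(G)$.

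For such $G$, a clique picks at most one vertex per part, so $c(G)=\prod_{i=1}^{r}(n_i+1)$. Writing $b_i:=n_i+1\ge 2$, the hypothesis becomes $\sum_i b_i=n(G)+\omega(G)\le s$, and it suffices to prove $\prod_i b_i\le 3^{(\sum_i b_i)/3}$. This follows pointwise from $b^3\le 3^b$ for every integer $b\ge 2$: one checks $b=2$ ($8<9$) and $b=3$ ($27=27$) directly, and notes that $\ln b/b$ is decreasing on $[e,\infty)$, so $\ln b/b\le \ln 3/3$ for $b\ge 3$. Multiplying over $i$ gives $\prod_i b_i\le 3^{s/3}$, with equality forcing every $b_i=3$, i.e., all parts of size two and $\sum b_i=s$; this is precisely the complement of a perfect matching on $2s/3$ vertices and requires $3\mid s$. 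The main obstacle is verifying the symmetrization step behaves as claimed, especially that $\omega$ does not increase and the iteration stabilizes at a complete multipartite graph; once those points are pinned down, the concluding computation is elementary.
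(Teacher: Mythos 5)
Your route is genuinely different from the paper's: the paper proves the lemma by induction on $s$, splitting into three cases according to the complement degrees (a vertex of missing degree $0$, one of missing degree at least $2$, or a perfect missing matching), while you symmetrize in Zykov style down to a complete multipartite graph and finish with the pointwise bound $b\le 3^{b/3}$. Your single-swap verifications are correct: $n$ is fixed, $\omega$ does not increase (swap $v$ for $u$ in any clique of $G'$ through $v$), and $c(G')-c(G)=c(G[N(u)])-c(G[N(v)])$ since cliques avoiding $v$ are untouched and $G'[N_{G'}(v)]=G[N(u)]$; so the constraint $n+\omega\le s$ is preserved and a bound for the reduced graph transfers back. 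The closing computation $\prod_i b_i\le 3^{(\sum_i b_i)/3}\le 3^{s/3}$, with equality only when every $b_i=3$, is also fine and even identifies the complement of a perfect matching as the unique equality case among complete multipartite graphs.

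The one genuine gap is the stabilization step, exactly the point you flagged. The tie-break ``maximize $c$, then minimize the number of non-edges'' does not force non-adjacency to be an equivalence relation if you only use single-vertex swaps: take $u\not\sim v$, $v\not\sim w$, $uw\in E$ with $c(G[N(v)])\ge\max\bigl(c(G[N(u)]),c(G[N(w)])\bigr)$ (otherwise a single swap already strictly increases $c$). Then every admissible single swap among $u,v,w$ leaves $c$ unchanged, and the non-edge count changes only by $\deg(u)-\deg(v)$ or $\deg(w)-\deg(v)$, so minimality merely yields $\deg(u)=\deg(v)=\deg(w)$ and no contradiction; the iteration can stall at a non-multipartite maximizer as far as this argument shows. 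The standard repair is the two-vertex symmetrization: replace both $N(u)$ and $N(w)$ by $N(v)$ simultaneously. Cliques avoiding $u,w$ are unchanged, cliques through $u$ (respectively $w$) now number $c(G[N(v)])$ each, and every clique containing both $u$ and $w$ (there is at least one, namely $\{u,w\}$) is destroyed, so the clique count changes by $2c(G[N(v)])-c(G[N(u)])-c(G[N(w)])+B$ with $B\ge 1$, a strict increase; since $n$ is fixed and $\omega$ again does not increase, this contradicts the maximality of $c$ over the finite class of graphs with $n(G)+\omega(G)\le s$. With that replacement of your tie-breaking argument, any maximizer is complete multipartite and your proof goes through; the paper's induction avoids this machinery and is shorter, but your argument has the side benefit of exhibiting the extremal structure directly.
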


\begin{proof} The proof is by induction on $s$, the base case $s < 3$ being trivial. Let
$G$ be a graph with $k(s)$ cliques, $n$ vertices, clique number $\omega$, and $n+\omega \leq s$. 

\noindent {\bf Case 1:} The complement of $G$ has a vertex $v$ of degree $0$. \\ Deleting $v$ decreases the number
of vertices by one and the clique number by one, and halves the number of cliques. We thus get $$k(s) \leq 2k(s-2) \leq 2 \cdot 3^{(s-2)/3} = 2\cdot 3^{-2/3}3^{s/3} < 3^{s/3}$$ by the induction hypothesis, completing this case. 

\noindent {\bf Case 2:} The complement of $G$ has a vertex $u$ with degree $d$ at least two. \\ Every clique of $G$ not containing $u$ has one less vertex, giving at most $k(s-1)$ such cliques. Every clique of $G$ containing $u$ is determined by the clique apart from $u$ and its $d$ non-neighbors, decreasing the number of possible
vertices by $d+1$ and the clique number by $1$, giving at most $k(s-d-2)$ such cliques. 
We thus obtain in this case $$k(s) \leq k(s-1)+k(s-d-2) \leq 
k(s-1)+k(s-4),$$ and, by the induction hypothesis, this is at most
$$3^{(s-1)/3}+3^{(s-4)/3}=3^{s/3}(3^{-1/3}+3^{-4/3}) < 3^{s/3},$$ completing this case. 

\noindent {\bf Case 3:} Every vertex has degree one in the complement. \\ The graph is
thus the complement of a perfect matching. Then $n=2s/3$ and $\omega=s/3$ and $G$ has
$3^{s/3}$ cliques, as we can choose independently for each non-edge of $G$ three possible intersections for that pair with the clique. 

\end{proof}

\subsection{Proof of Theorem \ref{main}} \label{proofofmain}

The goal of this section is to prove Theorem \ref{main}. We assume that $G$ is a graph with $n$ vertices and no $K_t$-minor.  

Lee and Oum \cite{LO} (and in earlier works such as \cite{KW}) present a simple algorithm to enumerate all the cliques in a graph $G$, called the ``\emph{peeling process}". It is helpful in bounding the number of cliques in a graph. Since we will use this peeling process again later in the paper, we separate it here from the rest of the text.

\vspace{0.3cm}

\noindent {\bf Peeling Process}  

Pick a minimum degree vertex $v_1$ and include it in the clique, and continue the algorithm picking cliques to add to $v_1$ within the neighborhood $N(v_1)$ of $v_1$. Once we have exhausted all cliques containing $v_1$, we delete it from the graph and continue the algorithm amongst the remaining vertices. 


In this way, every clique $K$ in $G$ has an ordering $v_1,\ldots,v_s$ of all its vertices so that the following holds. Let $G_0=G$. After deleting vertices one at a time of degree smaller than the degree of $v_1$, we obtain an induced subgraph $G_1$ that contains $K$ in which $v_1$ has the minimum degree.  After picking $v_1,\ldots,v_i$, we delete from $G_i$ vertex $v_i$ and its nonneighbors, and vertices one at a time of degree smaller than that of $v_{i+1}$ and obtain an induced subgraph $G_{i+1}$ of $G_i$ that contains $K \setminus \{v_1,\ldots,v_i\}$ and in which $v_{i+1}$ has the minimum degree. 

The peeling process allows us to focus on a remaining induced subgraph which is quite dense and is easier to analyze. There are not many vertices of any given clique not in a remaining dense induced subgraph. These few vertices contribute a relatively small factor to the total number of cliques.


\begin{proof}[Proof of Theorem \ref{main}]
We bound the number of cliques by the peeling process. Let $n_i$ denote the number of vertices in $G_i$.   Let $c=0.1$ and $r=r(K)$ be the least positive integer such that $n_r \leq 1.05t$ or $n_{r+1} \geq n_{r}-cn_{r}^{1/2}$ or $r=s$. Recall that $s$ is the order of clique $K$. 

We first give a bound on $r$. The result of Thomason \cite{Th1} implies, as $G$ does not contain a $K_t$-minor, that every subgraph of it has a vertex of degree at most $d:=t\sqrt{\log t}$ (here we assume $t$ is sufficiently large). Hence $n_1 \leq d+1$. We have $n_i < n_{i-1}-cn_{i-1}^{1/2}$ for each $i \leq r$. In particular, if $j \leq r$ and $j-i \geq c^{-1}(2n_i)^{1/2}$, then $n_j<n_i/2$. It follows that $$r \leq 1+\sum_{h \geq 0} 1+ c^{-1}((d+1)/2^{h-1})^{1/2} \leq 4c^{-1}t^{1/2}\log^{1/4} t:=r_0.$$

We next give a bound on the number of choices for the first $r$ vertices $v_1,\ldots,v_r$ in cliques. We have $n_0=n$ choices for $v_1$. We will use a weak estimate that the number of choices for $v_2,\ldots,v_r$ having picked $v_1$ is at most 
$${|G_1|-1 \choose \leq r_0} \leq r_0 \binom{t\sqrt{\log t}}{r_0} \leq r_0 \left(\frac{e t \sqrt{\log t}}{r_0}\right)^{r_0} =  2^{O(t^{1/2}\log^{5/4} t)}.$$
We thus have at most $n2^{O(t^{1/2}\log^{5/4} t)}$ choices for $v_1,\ldots,v_r$. 

Recall that $G$ has $n$ vertices and no $K_t$-minor and our goal is to bound the number of cliques in $G$. We have already bounded the number of choices for the first $r$ vertices in a clique, and it suffices to bound the number of choices for the remaining vertices. We split the cliques into three types: those with $n_r \leq 1.05t$, those with $r=s$ and $n_r > 1.05t$, and those with $r<s$, $n_r>1.05t$, and $n_{r+1} \geq n_r-cn_r^{1/2}$. 



We first bound the number of cliques with $n_r \leq 1.05t$. As there are at most $1.05t$ possible remaining vertices to include after picking $v_1,\ldots,v_r$ for the clique, then there are at most $2^{1.05t}$ ways to extend these vertices. We thus get at most $n2^{1.05t+O(t^{1/2}\log^{5/4} t)}$ cliques of the first type. 

We next bound the number of cliques with $r=s$. We saw that this is at most $2^{r_0}=2^{O(t^{1/2}\log^{5/4} t)}$.

Finally, we bound the number of cliques with  $n_r \geq 1.05t$, $r<s$, and $n_{r+1} \geq n_r-cn_r^{1/2}$. In this case, we are left with a graph $G_r$ where $v_r$ has the minimum degree, and it and its non-neighbors are not in $G_{r+1}$, which has $n_{r+1}$ vertices. Thus, the complement of $G_r$ has maximum degree $\Delta \leq n_{r}-n_{r+1} \leq cn_r^{1/2}$. Thus $G_r$  is dense. Let $\omega$ be the clique number of $G_r$. Since $G$ has no $K_t$-minor, $\omega < t$, so $n_r-\omega > n_r-t\geq n_r - \frac{n_r}{1.05} \geq 0.04n_r \geq 2(cn_r^{1/2})^2+2 \geq 2\Delta^2+2$, where we used that $t$ is sufficiently large. Hence, the condition of Lemma \ref{mainlem1} is satisfied, so the Hadwiger number $h(G_r)$ is $\lfloor \frac{n_r+\omega}{2} \rfloor<t$, and so $n_r+\omega<2t$. By Lemma \ref{mainlem2}, the number of cliques in $G_r$ is at most $3^{2t/3}$. This gives a bound on the number of ways of completing a clique in $G$ having picked the first $r$ vertices. We thus get at most $3^{2t/3+O(t^{1/2}\log^{5/4} t)}n$ cliques of the last type. 

Adding up all possible cliques, we get at most $3^{2t/3+O(t^{1/2}\log^{5/4} t)}n$ cliques in $G$, completing the proof. 
\end{proof}

\section{Counting cliques in graphs with no $H$-minor}\label{section3}
In this section we prove Theorem \ref{generalmain} which, for any connected graph $H$, gives an essentially sharp upper bound on the number of cliques an $H$-minor free graph on $n$ vertices can have. 

By the peeling process as described in Subsection \ref{proofofmain} and similar to the proof of Theorem \ref{main}, we end up with a very dense induced subgraph $G_r$ where the maximum missing degree $\Delta$ is very small. We can then pass to a minor of this induced subgraph with the most cliques, and this minor will also be $H$-minor free. This minor is an example of a social graph, which we next define. 

A {\it social graph} is a graph such that contracting any edge of it does not increase the number of cliques. It appears to be a challenging problem to characterize social graphs. We show in the next subsection that any very dense social graph $G$ is, apart from a small number of vertices, the complement of a (not necessarily perfect) matching. This is an important tool in our results bounding the number of cliques in $H$-minor free graphs and more generally in minor-closed families of graphs which are closed under disjoint union. The proof of this important tool is shown in two steps. First, we show that almost every vertex is in a large proportion of the cliques in $G$, i.e., the fraction of cliques in $G$ containing that vertex is large. We call those vertices {\it good} and will define this notion formally soon. Second, we show that by ignoring the small number of vertices which are not good and their neighbors, each remaining vertex has non-degree at most one.

\subsection{The structure of very dense social graphs} \label{structure}
We now formally define what it means for a vertex to be good. 
For each vertex $v \in G$, let $\alpha_v$ denotes the fraction of cliques in $G$ containing $v$. Let $\alpha^*$ be a number slightly smaller than $\frac{2-\sqrt{2}}{2}$, say $\alpha^* = \frac{2-\sqrt{2}}{2} - 0.001 \approx .29189$. We say $v$ is a \emph{good vertex} if $\alpha_v \geq \alpha^*$, and otherwise it is a \emph{bad vertex}. Let $U$ be the set of bad vertices in $G$. We next show that if $G$ contains too many bad vertices, then we can always find two adjacent bad vertices to contract such that the total number of cliques strictly increases. Therefore a dense social graph cannot have too many bad vertices.

The next lemma bounds the number of bad vertices in a social graph in terms of the maximum missing degree (i.e., the maximum degree in the complement). 

\begin{lem}\label{contract}
Every social graph $G$ with maximum missing degree $\Delta$ has at most $300\Delta^2$ bad vertices. 
\end{lem}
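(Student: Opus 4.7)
The plan is to argue by contradiction. Suppose $G$ is a social graph with $|U|>300\Delta^2$ bad vertices; my goal is to exhibit adjacent $u,v\in U$ whose contraction strictly increases the number of cliques, contradicting the social property. The first step is to unpack the social condition. For an edge $uv$, write $A=N(u)\setminus\{v\}$, $B=N(v)\setminus\{u\}$, $C=N(u)\cap N(v)$, and let $k_X$ denote the number of cliques in $G[X]$. Partitioning cliques of $G$ according to their intersection with $\{u,v\}$ yields
\[
c(G)=k_{V\setminus\{u,v\}}+k_A+k_B+k_C,\qquad c(G')=k_{V\setminus\{u,v\}}+k_{A\cup B},
\]
so being social is equivalent to $k_{A\cup B}\leq k_A+k_B+k_C$ for every edge. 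Using the identities $\alpha_u\,c(G)=k_A+k_C$ and $\alpha_v\,c(G)=k_B+k_C$, the strict contraction gain that I want to produce rewrites as $k_{A\cup B}+k_C>(\alpha_u+\alpha_v)\,c(G)$.

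Next, since $\bar G$ has maximum degree $\Delta$, the number of non-edges of $G$ inside $U$ is at most $|U|\Delta/2$, so $|E(G[U])|\geq \binom{|U|}{2}-|U|\Delta/2$, which is quadratic in $|U|$ once $|U|>300\Delta^2$; hence there is a large supply of bad adjacent pairs. For each such pair I plan to exploit two structural observations. First, because $u$ and $v$ together have at most $2\Delta$ non-neighbors, the set $V\setminus(A\cup B)$ has at most $\Delta+2$ elements, so $k_{A\cup B}$ is close to $k_{V\setminus\{u,v\}}$; more precisely, the ``extra'' term $E=k_{V\setminus\{u,v\}}-k_{A\cup B}$ is small whenever the common non-neighborhood $R(u,v)=\bar N(u)\cap\bar N(v)$ is small. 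Second, the choice of $\alpha^*$ with $(1-\alpha^*)^2>1/2$ gives $\alpha_u+\alpha_v<2\alpha^*<2-\sqrt 2$ for bad $u,v$, and hence $k_{V\setminus\{u,v\}}>(\sqrt 2-1)\,c(G)+k_C$. A double counting on common non-neighborhoods (each vertex $w$ is a common non-neighbor of at most $\binom{\Delta}{2}$ pairs in $U$), combined with the quadratic abundance of bad adjacent pairs, lets me isolate an adjacent bad pair $(u,v)$ with simultaneously small common non-neighborhood and sizable $k_C$, thereby forcing the strict inequality $k_{A\cup B}+k_C>(\alpha_u+\alpha_v)c(G)$ and contradicting socialness.

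The main obstacle is executing this pigeonhole cleanly. The delicate regime is $\alpha_u+\alpha_v\in(1/2,2\alpha^*)$: there the bound on $k_{V\setminus\{u,v\}}$ alone does not beat $(\alpha_u+\alpha_v)c(G)$, and the $k_C$ contribution must be harnessed via the estimate $|V\setminus C|\leq |R|+2\Delta+2$ together with a positive-correlation-type inequality between the events ``$u\notin S$'' and ``$v\notin S$'' for a uniformly random clique $S$. The constant $300$ is chosen to provide enough slack so that across the many candidate adjacent bad pairs, one can be found meeting both the small-$R$ and large-$k_C$ requirements simultaneously.
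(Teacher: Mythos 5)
Your reduction is set up correctly: for an edge $uv$, with $A=N(u)\setminus\{v\}$, $B=N(v)\setminus\{u\}$, $C=N(u)\cap N(v)$, the identities $c(G)=k_{V\setminus\{u,v\}}+k_A+k_B+k_C$, $c(G')=k_{V\setminus\{u,v\}}+k_{A\cup B}$, and the equivalence of a strict gain with $k_{A\cup B}+k_C>(\alpha_u+\alpha_v)c(G)$ are all right, and the overall strategy (contract an edge between two bad vertices) is the paper's. But the two steps that would actually produce such a pair are missing or wrong. First, the lower bound on $k_C$ (equivalently on $p_{uv}$, the fraction of cliques containing both $u$ and $v$) for a \emph{specific} pair is never established, and the tool you invoke --- a positive-correlation inequality between the events $u\notin S$ and $v\notin S$ for a uniform random clique $S$, i.e.\ $p_{uv}\geq\alpha_u\alpha_v$ --- is false in general even for adjacent $u,v$: in the graph with edge set $\{uw,uv,vz\}$ one has $\alpha_u=\alpha_v=3/8$ but $p_{uv}=1/8<9/64$. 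The paper circumvents exactly this by never arguing per pair: it greedily extracts $U'\subseteq U$ of size $\geq |U|/(\Delta^2+1)\geq 150$ whose vertices are pairwise at distance at least three in the complement, and then lower bounds the \emph{average} of $p_{uv}$ over pairs in $U'$ via convexity, $\sum_{u,v\in U'}p_{uv}=E\left[\binom{|K\cap U'|}{2}\right]\geq\binom{E[|K\cap U'|]}{2}$, which yields an average of $\alpha_u+\alpha_v-p_{uv}$ below $2\alpha^*-\alpha^{*2}+o(1)<1/2$. The numbers are tight here (this is precisely why $\alpha^*$ is taken just below $(2-\sqrt2)/2$), so the slack you hope to get from the constant $300$ cannot absorb an unproved correlation step.

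Second, your treatment of non-dominating pairs does not work quantitatively. The loss term $E=k_{V\setminus\{u,v\}}-k_{A\cup B}$ counts cliques of $G-u-v$ meeting the common non-neighborhood $R(u,v)$, and this is not small just because $|R|$ is small: a single vertex of $R$ can lie in up to about half of the cliques of $G-u-v$, which already kills the inequality in the delicate regime you identify. So effectively you need $R=\emptyset$, i.e.\ $\{u,v\}$ a dominating edge --- which is also what makes the contracted vertex universal so that $c(G')=2k_{V\setminus\{u,v\}}$. Your double count (each vertex is a common non-neighbor of at most $\binom{\Delta}{2}$ pairs) only shows that most bad adjacent pairs are dominating when $|U|\gg\Delta^3$; at the actual threshold $|U|\sim 300\Delta^2$ with $\Delta$ large it gives nothing, whereas the paper's distance-three selection of $U'$ guarantees every pair in $U'$ is dominating. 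In short, the proposal correctly identifies what must be shown but lacks the paper's two key devices (the structured subset $U'$ and the convexity/averaging bound on $\sum p_{uv}$), and the substitute it proposes for the latter is false.
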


\begin{proof}

We suppose for contradiction that the number $|U|$ of bad vertices is at least $300\Delta^2$. We may assume $\Delta \geq 1$ since otherwise $G$ is a clique and there are no bad vertices. 
Since $G$ has maximum missing degree $\Delta$, then, in the complement of $G$,  each vertex has distance at most two to at most $\Delta+\Delta(\Delta-1)=\Delta^2$ other vertices. Thus we can greedily construct a subset $U'$ of the set $U$ of bad vertices of cardinality at least $|U|/(\Delta^2+1) \geq 300\Delta^2 /(\Delta^2+1)  \geq  150$ such that every pair of vertices in $U'$ have distance at least three in the complement of $G$. Equivalently, this condition says that $U'$ forms a clique in $G$ and every pair of vertices in $U'$ is a dominating set of $G$. 


Pick a clique $K$ in $G$ uniformly at random. By linearity of expectation, we have 
\beq E[|K \cap U'|] = \sum_{u \in U'} E[1_{u \in K}] = \sum_{u \in U'} \alpha_u < |U'| \alpha^*. \label{firstm} \eeq
The second equality holds since $\alpha_u$ is the fraction of cliques containing $u$, and the inequality holds because every $u \in U'$ is a bad vertex and thus $\alpha_u < \alpha^*$ by definition. 

For any unordered pair of distinct vertices $u, v\in U'$, we want to count the fraction of cliques containing either $u$ or $v$. By the inclusion-exclusion formula, this is given by the fraction of cliques containing $u$, i.e., $
\alpha_u$, plus the fraction of cliques containing $v$, i.e., $\alpha_v$, minus the fraction of cliques containing both $u$ and $v$, which we denote by $p_{uv}$. In summary, the fraction of cliques containing either $u$ or $v$ is $\alpha_u + \alpha_v - p_{uv}$. We will use an averaging argument to give an upper bound on the average value of $\alpha_u + \alpha_v - p_{uv}$. First we lower bound the sum of $p_{uv}$ over the unordered pairs of distinct vertices $u,v \in U'$: 

\beq \sum_{u,v \in U'} p_{uv} = \sum_{u,v \in U'} E[1_{u,v \in K}] = E[\sum_{u,v\in U'} 1_{u,v \in K}] = E\left[ \binom{|K \cap U'|}{2}\right] \geq \binom{ E[ |K \cap U'|]}{2} = \binom{\sum_{u \in U'} \alpha_u}{2}. \label{cs}\eeq 
The inequality above is the Cauchy-Schwarz inequality while the last equality in (\ref{cs}) holds because of the last equality in (\ref{firstm}). 
Combining the results above, we have the sum over unordered pairs $u, v \in U'$ satisfies 
\begin{eqnarray*}
\sum_{u,v \in U'} \alpha_u + \alpha_v - p_{uv} &\leq& \sum_{u,v \in U'}  (\alpha_u + \alpha_v) - \binom{\sum_{u \in U'} \alpha_u}{2}\\
&=& \sum_{u,v \in U'}  (\alpha_u + \alpha_v) - \frac{\sum_{u \in U'} \alpha_u^2}{2} - \sum_{u,v \in U'} \alpha_u \alpha_v + \frac{\sum_{u \in U'} \alpha_u}{2} \\
&<&  \sum_{u,v \in U'}  (\alpha_u + \alpha_v - \alpha_u \alpha_v) + \frac{1}{2}(\alpha^*-\alpha^{*2})|U'|\\
&<& \sum_{u,v \in U'} (2\alpha^* - \alpha^{*2}) + 0.1034|U'|,
\end{eqnarray*}
where the first inequality holds by (\ref{cs}), the second line holds by expanding the binomial coefficient, the third line holds by rearranging terms and  $\alpha_u < \alpha^*<1/2$ for $u \in U'$ and $x-x^2$ is an increasing function for $x<1/2$, and the last inequality holds because $\alpha_u < \alpha^*<1$ for $u \in U'$ and thus $-(1-\alpha_u)(1-\alpha_v) < -(1-\alpha^*)(1-\alpha^*)$ which implies $\alpha_u + \alpha_v - \alpha_u \alpha_v < 2 \alpha^* - \alpha^{*2}$ . 
Averaging over all ${|U'| \choose 2}$ unordered pairs of distinct $u,v \in U'$, the average value of $\alpha_u + \alpha_v - p_{uv}$ is at most $$2\alpha^* - \alpha^{*2} + \frac{0.1034|U'|}{{|U'| \choose 2}}=2\alpha^* - \alpha^{*2} +\frac{0.2068}{|U'|-1}<1/2,$$
where in the last inequality we substituted in the value of $\alpha^*$ and used that $|U'| \geq 150$. Therefore, there exists a pair $u, v\in U'$ of distinct vertices such that the fraction of cliques containing either $u$ or $v$ is less than 1/2. 

We next show that contracting the edge $(u,v)$ to a vertex $u'$ yields a graph $G'$ with more cliques than $G$, contradicting that $G$ is a social graph. In $G$, the number of cliques not containing $u$ or $v$ is greater than the number of cliques containing $u$ or $v$ since the fraction of cliques containing $u$ or $v$ is less than 1/2. As the distance between $u$ and $v$ is more than two in the complement of $G$, then $u'$ is adjacent to all other vertices in $G'$. Thus, the number of cliques in $G'$ is twice the number of cliques in $G$ not containing $u$ or $v$; this is greater than the number of cliques in $G$, which equals to the number of cliques containing $u$ or $v$ plus the number of cliques not containing $u$ or $v$. This contradicts that $G$ is a social graph and completes the proof. 
\end{proof}

\begin{lem}\label{newlemmaneeded}
If $I$ is an independent set in a graph, then there is a vertex in $I$ which is in a fraction at most $1/(|I|+1)$ of the cliques in the graph. In particular, if the vertices in $I$ are good (i.e. are in a fraction at least $\alpha^*$ of the cliques), then $|I| \leq 2$. 
\end{lem}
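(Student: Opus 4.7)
My plan would be to exploit the fact that since $I$ is independent, every clique of $G$ contains at most one vertex of $I$. Writing $I = \{v_1,\dots,v_k\}$ with $k = |I|$, I would partition the cliques of $G$ as $\mathcal{C}_0 \sqcup \mathcal{C}_1 \sqcup \dots \sqcup \mathcal{C}_k$, where $\mathcal{C}_0$ consists of the cliques disjoint from $I$ and $\mathcal{C}_i$ consists of the cliques containing $v_i$. Setting $c$ equal to the total number of cliques and $c_i = |\mathcal{C}_i|$, we have $c = c_0 + \sum_{i=1}^{k} c_i$ and $\alpha_{v_i} = c_i/c$, so the desired bound $\min_i \alpha_{v_i} \leq 1/(k+1)$ is equivalent to showing $c_0 \geq c/(k+1)$.

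The heart of the argument will be a simple injection $\mathcal{C}_i \hookrightarrow \mathcal{C}_0$ for each $i$. Given a clique $K \in \mathcal{C}_i$, the set $K \setminus \{v_i\}$ is itself a clique, contained in $N(v_i)$; and since $I$ is independent and $v_i \in I$, the neighborhood $N(v_i)$ is disjoint from $I$, which forces $K \setminus \{v_i\} \in \mathcal{C}_0$. The map $K \mapsto K \setminus \{v_i\}$ is obviously injective, so $c_i \leq c_0$ for every $i$. Summing over $i$ gives $c = c_0 + \sum_i c_i \leq (k+1)c_0$, and then by averaging there must exist some index $i$ with $\alpha_{v_i} \leq (c-c_0)/(kc) \leq 1/(k+1)$.

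The ``in particular'' assertion is then immediate: if every vertex of $I$ were good, we would need $\alpha^* \leq 1/(k+1)$, and since $\alpha^* > 1/4$ this forces $k \leq 2$. I do not anticipate any serious obstacle here; the one bookkeeping subtlety is the convention that the empty set counts as a clique, which is what keeps the injection $K \mapsto K \setminus \{v_i\}$ well-defined when $K = \{v_i\}$ (and is also what makes the bound $1/(k+1)$ tight, as seen by taking $G$ to be edgeless on $I$). This is the same convention used implicitly in Lemma~\ref{mainlem2}, where the complement of a perfect matching on $2s/3$ vertices is stated to have exactly $3^{s/3}$ cliques.
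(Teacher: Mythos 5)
Your proof is correct and is essentially the same as the paper's: both partition the cliques by which (unique) vertex of $I$ they contain, use the injection $K \mapsto K \setminus \{v_i\}$ to show the class of cliques disjoint from $I$ is the largest, and then average over the $|I|$ vertex classes to find one with fraction at most $1/(|I|+1)$, with the ``in particular'' part following since $\alpha^* > 1/4$. Your added remarks on the empty clique convention and on why $K \setminus \{v_i\}$ avoids $I$ are fine elaborations of details the paper leaves implicit.
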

\begin{proof}
As $I$ is an independent set, each clique contains at most one vertex in $I$. We can therefore partition the cliques into $|I|+1$ types, those that contain a particular vertex in $I$ and those that contain none of the vertices in $I$. For each clique containing a particular vertex in $I$, deleting that vertex yields a unique clique that contains none of the vertices in $I$. Thus, the cliques containing none of the vertices in $I$ is the most common type, and by averaging, one of the vertices in $I$ is in a fraction at most $1/(|I|+1)$ of the cliques in $G$. If $|I| \geq 3$, then there is a vertex in $I$ in a fraction at most $1/4<\alpha^*$ of the cliques, and such a vertex is therefore not good.  
\end{proof}

We use the previous two lemmas in the proof of the next lemma which shows that for a very dense social graph,  almost all vertices have degree in the complement at most one. 

\begin{lem}\label{nondeg}
In every social graph $G$ with maximum missing degree $\Delta$, all but at most $600\Delta^3$ vertices  
have degree at most one in the complement of $G$.
\end{lem}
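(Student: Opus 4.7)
The plan is to reduce Lemma~\ref{nondeg} to Lemmas~\ref{contract} and~\ref{newlemmaneeded} via the key claim: \emph{every vertex of complement-degree at least two is either bad itself or has a bad complement-neighbor}. Once the claim is proved, since $|U|\le 300\Delta^2$ by Lemma~\ref{contract} and each bad vertex has at most $\Delta$ complement-neighbors, the number of vertices with complement-degree $\ge 2$ is at most $|U|(1+\Delta)\le 300\Delta^2(1+\Delta)\le 600\Delta^3$, assuming $\Delta\ge 1$ (if $\Delta=0$, $G$ is a clique and the conclusion is trivial).

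To prove the claim I would fix a good vertex $v$ with two complement-neighbors $u_1,u_2$ and show that one of $u_1,u_2$ must be bad, splitting on whether $u_1u_2\in E(G)$. If $u_1u_2\notin E(G)$, then $\{v,u_1,u_2\}$ is an independent set of three good vertices, directly contradicting Lemma~\ref{newlemmaneeded}, so one of $u_1,u_2$ is bad. The interesting case is $u_1u_2\in E(G)$, where the strategy is an inequality argument on the partition of cliques of $G$ by their intersection with $\{v,u_1,u_2\}$. Let $a,b,c,d,e$ denote the numbers of cliques whose intersection with $\{v,u_1,u_2\}$ equals $\emptyset,\{v\},\{u_1\},\{u_2\},\{u_1,u_2\}$ respectively; these exhaust the possibilities since $v$ is non-adjacent to each $u_i$. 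Straightforward vertex-removal injections ($K\mapsto K\setminus\{v\}$, $K\mapsto K\setminus\{u_2\}$, $K\mapsto K\setminus\{u_1\}$, $K\mapsto K\setminus\{u_1,u_2\}$) yield the structural bounds $b\le a$, $e\le c$, $e\le d$, and $e\le a$.

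Supposing toward contradiction that all three of $\alpha_v,\alpha_{u_1},\alpha_{u_2}$ are at least $\alpha^*$ and normalizing $n=a+b+c+d+e=1$, one computes
\[
\alpha_v+\alpha_{u_1}+\alpha_{u_2}=b+c+d+2e=1-a+e\ge 3\alpha^*,
\]
so $e\ge 3\alpha^*-1+a$. Since $a\ge b\ge\alpha^*$, this forces $e\ge 4\alpha^*-1$, and then $c,d\ge e\ge 4\alpha^*-1$. Summing the five parts gives
\[
1 = a+b+c+d+e \ge 2\alpha^* + 3(4\alpha^*-1) = 14\alpha^*-3,
\]
which fails whenever $\alpha^*>2/7\approx 0.286$. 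The choice of $\alpha^*$ slightly below $(2-\sqrt 2)/2\approx 0.293$ from Subsection~\ref{structure} is comfortably above this threshold, yielding the required contradiction. Hence at least one of $v,u_1,u_2$ is bad, and since $v$ was assumed good, $u_1$ or $u_2$ is bad.

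The main obstacle is the second case: Lemma~\ref{newlemmaneeded} does not apply when $u_1u_2\in E(G)$, and the inequality argument depends essentially on the quantitative value of $\alpha^*$. The threshold $2/7$ that drives the final contradiction is strictly smaller than the bound $(2-\sqrt 2)/2$ from Lemma~\ref{contract}, which is precisely why the single value of $\alpha^*$ fixed earlier serves all three lemmas simultaneously. Once the key claim is in hand, the counting step at the beginning completes the proof of Lemma~\ref{nondeg}.
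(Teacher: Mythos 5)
Your proof is correct, and its core step is genuinely different from the paper's. Both arguments share the same outer skeleton: by Lemma \ref{contract} there are at most $300\Delta^2$ bad vertices, the exceptional set is taken to be the bad vertices together with their complement-neighbors (at most $300\Delta^2(1+\Delta)\le 600\Delta^3$ vertices for $\Delta\ge 1$), and the case of two non-adjacent non-neighbors is dispatched by Lemma \ref{newlemmaneeded}. Where you diverge is the heart of the matter: the paper fixes a good vertex $v$ all of whose non-neighbors $T$ (with $|T|=d\ge 2$) are good, shows $T$ is a clique, classifies the cliques of $G\setminus T$ by their common neighborhood $S\subseteq T$ via the quantities $k_S$, changes variables to $\beta_i$, and then shows the resulting system of inequalities is infeasible by analyzing a small linear program (arguing the optimum has support of size at most two, etc.), ultimately reaching the same threshold $\alpha^*\le 2/7$. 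You instead look at only two non-neighbors $u_1,u_2$ of a good $v$, partition all cliques into the five classes determined by their trace on $\{v,u_1,u_2\}$, and use elementary deletion injections ($b\le a$, $e\le c$, $e\le d$) to get $1\ge 14\alpha^*-3$, which contradicts $\alpha^*>2/7$; I checked the identities $\alpha_v=b$, $\alpha_{u_1}=c+e$, $\alpha_{u_2}=d+e$ and the injections, and they are valid even when $v$ has further non-neighbors, so considering just two suffices. Your route buys a shorter, LP-free argument for the same constant; the paper's route works with the entire non-neighborhood at once and extracts the full constraint system (\ref{system}), at the cost of the change of variables and the linear-programming case analysis, but it yields no better threshold—both proofs bottom out at $2/7$, comfortably below the chosen $\alpha^*\approx 0.29189$.
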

\begin{proof}
Let $V'$ be the set of vertices in $G$ excluding those that are bad vertices or the non-neighbors of bad vertices. Therefore $|V'| \geq |V(G)| - 300\Delta^2(1 + \Delta) \geq |V(G)| - 600\Delta^3$ by Lemma \ref{contract}.

All the vertices in $V'$ and their non-neighbors in $G$ are good vertices. 
If every vertex $v \in V'$ has degree at most one in the complement of $G$, then we are done.
So suppose for the sake of contradiction that there exists a vertex $v \in V'$ whose set $T = \{u_1, \dots, u_d\}$ of non-neighbors has cardinality $d \geq 2$. As $v \in V'$, the set $T$ consists of only good vertices. The set $T$ forms a clique as otherwise there are nonadjacent $u_i,u_j \in T$, and $\{v,u_i,u_j\}$ form an independent set of order $3$ of good vertices, contradicting Lemma \ref{newlemmaneeded}.

For any subset $S\subset T$, let $k_S$ be the number of cliques $K$ in $G \setminus T$ whose set of common neighbors in $T$ is $S$. In other words, we are counting cliques $K \in G \setminus T$ such that $K \cup S$ is a clique and $K \cup S \cup \{u_i\}$ is not a clique for any $u_i \in T \setminus S$. 
Each of the $k_S$ cliques in $G \setminus T$ has $2^{|S|}$ ways of extending it to a clique in $G$. Therefore the total number of cliques in $G$ is 
$ \sum_{S \subset T} k_S 2^{|S|}$, where the summation is over all subsets of $T$ including the empty set. 

We also want to count the number of cliques containing a particular vertex $u_i \in T$. Notice that any clique $K$ in $G \setminus T$ can be extended to include $u_i$ if and only if the vertices of $K$ have common neighborhood in $T$ containing $u_i$. Therefore the number of cliques including $u_i$ is $\sum_{S \subset T, u_i \in S} k_S 2^{|S|-1}.$ Hence,  the fraction $\alpha_{u_i}$ of cliques in $G$ that contain $u_i$ satisfies
\[ \alpha_{u_i} = \frac{\sum_{S \subset T, u_i \in S} k_S 2^{|S|-1}}{\sum_{S \subset T} k_S 2^{|S|}}. \]

For any clique $K$ in $G \setminus T$, suppose its set of common neighbors in $T$ is $S$. If $S  = \emptyset$, then $K$ cannot be extended to add any vertex in $T$, and thus does not contribute to any clique in $G$ that contains a vertex in $T$. When $S \neq \emptyset$, then $K$ can be extended to $S \subset T$. This clique $K$ contributes to $2^{|S|}-1$ cliques in $G$ that contains a vertex in $T$ by extending $K$ to any non-empty subset of vertices in $S$. Therefore the fraction of cliques in $G$ that contains at least one vertex in $T$ is
\[ \frac{\sum_{S \subset T, S \neq \emptyset} k_S (2^{|S|}-1)}{\sum_{S \subset T} k_S 2^{|S|}}. \]

It will also be helpful to compute the number of cliques containing $v$. Notice that $T$ is the set of all the non-neighbors of $v$. As $v$ is not adjacent to any vertex in $T$, no clique in $G$ contains $v$ and a vertex in $T$. So there are only three types of cliques in $G$: 

\begin{enumerate}
\item Those containing $v$ and hence no vertex in $T$, 
\item those not containing $v$ nor any vertex in $T$, 
\item  those that contain at least one vertex in $T$ and hence does not contain $v$. 
\end{enumerate} 
The fraction of cliques of type $1$ is $\alpha_v$ by definition. The number of cliques of type 1 is equal to the number of cliques of type 2. This is because a clique $K$ of type 1 if and only if  $K \setminus \{v\}$ is a clique of type 2. Hence, the fraction of cliques of type 3, i.e., cliques containing at least one vertex in $T$, is $1-2\alpha_v$. Thus, we have
\[1-2\alpha_v = \frac{\sum_{S \subset T, S \neq \emptyset} k_S (2^{|S|}-1)}{\sum_{S \subset T} k_S 2^{|S|}},
\]
and therefore by rearranging the above equality we have
\beq 2 \alpha_v  = \frac{\sum_{S \subset T} k_S}{\sum_{S \subset T} k_S 2^{|S|}}. \label{alv} \eeq
Since $v$ and the $u_i$'s are all good vertices, we have the inequalities $\alpha_v \geq \alpha^*, \alpha_{u_i} \geq \alpha^*$ for all $1 \leq i \leq d$. We next show that these inequalities cannot simultaneously hold.

Consider a change of variables, letting $\beta_i = \frac{\sum_{|S| =i}2^i k_S}{\sum_{S \subset T} k_S 2^{|S|}}.$  Clearly $\sum_{i =0}^d \beta_i = 1$ and each $\beta_i \geq 0$. 
We have 
\begin{align*} \sum_{i = 1}^d \alpha_{u_i} &=\frac{ \sum_{i=1}^d  \sum_{S \subset T, u_i \in S} k_S 2^{|S|-1}}{\sum_{S \subset T} k_S 2^{|S|}}  =  \frac{  \sum_{S \subset T} \sum_{i=1}^d 1_{u_i \in S} k_S 2^{|S|-1}}{\sum_{S \subset T} k_S 2^{|S|}}  \\
&= \frac{  \sum_{S \subset T, S \neq \emptyset} k_S 2^{|S|-1}|S|}{\sum_{S \subset T} k_S 2^{|S|}} 
= \sum_{j=1}^d  \frac{1}{2}\frac{  \sum_{S \subset T, |S| =j} k_S 2^{|S|}|S|}{\sum_{S \subset T} k_S 2^{|S|}}.
 \end{align*}
By the definition of the $\beta_i$'s, we therefore have
\[ \sum_{i = 1}^d \alpha_{u_i}  = \frac{1}{2} \sum_{i = 1}^d i\beta_i. \]
Since $\alpha_{u_i} \geq \alpha^*$, multiplying both sides by $2/d$, we have 
\beq \sum_{i = 1}^d i\beta_i /d \geq 2 \alpha^*. \label{dalpha}
\eeq
We can also write $\alpha_v$, whose value is given in (\ref{alv}), in terms of the $\beta_i$'s. Clearly, 
\beq
2 \alpha_v  = \frac{\sum_{S \subset T} k_S}{\sum_{S \subset T} k_S 2^{|S|}} = \frac{\sum_{i=0}^d \sum_{|S| = i} k_S}{\sum_{S \subset T} k_S 2^{|S|}} = \sum_{i = 0}^d \beta_i/2^i. \label{alphav}
\eeq
From (\ref{dalpha}) and (\ref{alphav}), we have the following inequalities,
\beq\sum_{i = 1}^d i\beta_i /d \geq 2 \alpha^*, \ \ \sum_{i = 0}^d \beta_i/2^i \geq 2\alpha^*, \ \ \ \sum_{i=0}^d \beta_i = 1, \ \ \  ~\textrm{each}~\beta_i \geq 0. \label{system}\eeq
We want to show that the inequalities in (\ref{system}) above have no solution.
We view the above system of inequalities as the following linear programming problem (LP0):
 \begin{align}
 \text{(LP0)}  \ \ & \max \sum_{i = 1}^d i\beta_i /d,\label{ln1}\\
&\text{subject to: }\nonumber \\
 & \ \ \ \sum_{i = 0}^d \beta_i/2^i \geq 2\alpha^*, \label{ln2} \\
 & \  \ \ \sum_{i=0}^d \beta_i = 1, ~\textrm{each}~\beta_i \geq 0. \label{ln3}
\end{align}
We show that the objective function $\sum_{i = 1}^d i\beta_i /d$ in (\ref{ln1}) is always less than $2\alpha^*$, and thus obtain a contradiction to the first inequality in (\ref{system}).

 We first show that there cannot be three $\beta_i$'s which are positive in the optimal solution. We prove this by contradiction. Suppose the optimal solution has some $\beta_{i_1}, \beta_{i_2}, \beta_{i_3}$ being positive with $i_1 < i_2 < i_3$. We keep the rest of the $\beta_{i}$'s unchanged, and thus $\beta_{i_1} + \beta_{i_2} + \beta_{i_3} = a$ where $a = 1 - \sum_{i \neq i_1, i_2, i_3} \beta_i$ is fixed and independent from $\beta_{i_1}, \beta_{i_2}, \beta_{i_3}$. We use local adjustment to see this cannot be an optimal solution. From the equality we know that $\beta_{i_3} = a - (\beta_{i_1} + \beta_{i_2})$. Constraint (\ref{ln2}) states that $\beta_{i_1}/2^{i_1} + \beta_{i_2}/2^{i_2} + \beta_{i_3}/2^{i_3} \geq c$ where $c$ is a constant in terms of only the rest of the $\beta_i$'s, which we have fixed.  Writing $\beta_{i_3}$ in terms of $\beta_{i_1}, \beta_{i_2}$, we have
\[ \beta_{i_1}/2^{i_1} + \beta_{i_2}/2^{i_2} + \beta_{i_3}/2^{i_3} = \left( \frac{1}{2^{i_1}} - \frac{1}{2^{i_3}}\right) \beta_{i_1} + \left( \frac{1}{2^{i_2}} - \frac{1}{2^{i_3}}\right) \beta_{i_2} + \frac{1}{2^{i_3}} \left(\beta_{i_1} +\beta_{i_2}+\beta_{i_3}\right) \geq c, \] 
and thus 
\[
 \left( \frac{1}{2^{i_1}} - \frac{1}{2^{i_3}}\right) \beta_{i_1} + \left( \frac{1}{2^{i_2}} - \frac{1}{2^{i_3}}\right) \beta_{i_2} \geq c - a/2^{i_3}.
\]
Thus (LP0) is equivalent to maximize, after fixing the other $\beta_{i}$'s, the objective function $i_1 \beta_{i_1} + i_2 \beta_{i_2}  + i_3 \beta_{i_3} = i_3a - (i_3-i_1) \beta_{i_1} - (i_3 - i_2)\beta_{i_2}.$  
Therefore our new linear programming problem becomes the following linear programming problem (LP0')
\begin{align}
\text{(LP0')}  \ \ &  \max_{\beta_{i_1}, \beta_{i_2}} i_3a - (i_3-i_1) \beta_{i_1} - (i_3 - i_2)\beta_{i_2} \label{ln1'}\\
& \text{subject to:} \nonumber \\
  & \ \ \  \left( \frac{1}{2^{i_1}} - \frac{1}{2^{i_3}}\right) \beta_{i_1} + \left( \frac{1}{2^{i_2}} - \frac{1}{2^{i_3}}\right) \beta_{i_2} \geq c - a/2^{i_3}, \label{ln2'} \\
   & \ \ \  \beta_{i_1} + \beta_{i_2} \leq a, \label{ln3'} \\
  & \ \ \  \beta_{i_1}\geq 0, \label{ln4'} \\
  & \ \ \   \beta_{i_2} \geq 0.  \label{ln5'}
\end{align}
Clearly for this two-variable linear programming (LP0'), the four slopes are all different; the four constraints define a convex bounded polygon. We know the optimal solution should appear at an extreme point of the polygon. In other words, the optimal solution should be at the intersection of two constraints. If one of the two constraints is (\ref{ln4'}) or (\ref{ln5'}) then we are done, since in this case we have either $\beta_{i_1}$ or $\beta_{i_2}$ is $0$. Otherwise, one of the constraints that determines the optimal solution has to be (\ref{ln3'}). We are also done in this case since if we have equality in (\ref{ln3'}), then $\beta_{i_3} =a - (\beta_{i_1} + \beta_{i_2})=0$. In any case this linear program will give one of $\beta_{i_1}, \beta_{i_2}, \beta_{i_3}$ is 0. This contradicts the assumption that these three values are positive. Therefore the optimal solution to (\ref{ln1}) has at most two of the $\beta_i$'s are non-zero. 

Suppose by maximizing (\ref{ln1}) we have $\beta_h=0$ for all $h$ except for $h=i,j$. Without loss of generality we assume $0 \leq i < j$. By (\ref{ln3}) we have $\beta_i + \beta_j = 1$. By (\ref{ln2}), we obtain
$2 \alpha^* \leq \beta_i/2^i + \beta_j / 2^j < \beta_i/2^i + \beta_j / 2^{i} =\frac{1}{2^{i}}$. This implies $i=0$ as $2 \alpha^* >1/2$. Thus in (\ref{ln1}) we are just maximizing $j\beta_j /d$, and it needs to be at least $2\alpha^*$ by (\ref{system}). If $j=1$, we obtain $2\alpha^* \leq j\beta_j/d \leq  \beta_j/2$, implying $\beta_j \geq 4\alpha^*>1$, a contradiction. Hence, $j \geq 2$. Using (\ref{ln2}), (\ref{ln3}), the fact that $j \leq d$, and $2\alpha^* \leq j\beta_j/d \leq \beta_j$, we have 
$$2\alpha^* \leq \frac{\beta_i}{2^i}+\frac{\beta_j}{2^j} =\frac{\beta_0}{2^0}+\frac{\beta_j}{2^j} =( 1 - \beta_j) + \frac{\beta_j}{2^j} = 1 - \left(1-\frac{1}{2^j}\right)\beta_j \leq 1- \frac{3}{4} \beta_j \leq  1-\frac{3}{4} \cdot 2\alpha^*,$$
which would imply $\alpha^* \leq 2/7$, contradicting the choice of $\alpha^*$ (which is greater than $.29>2/7$), which completes the proof.  
\end{proof}

\subsection{On the number of cliques in very dense $H$-minor free graphs}\label{extremelydense} 
In this subsection, we prove a simple lemma that gives an essentially tight upper bound on the number of cliques an $H$-minor free graph can have in which the complement has maximum degree at most one. 


%

\begin{lem}\label{twocasesclique}
Let $H$ be a connected graph on $t$ vertices and $x$ be the size of a maximum matching in the complement of $H$. 
Let $G'$ be an $H$-minor free graph whose complement has maximum degree at most one. 

If $x < (2- 3/\log 3)t$, then the number of cliques in $G'$ is less than $3^{\frac{2}{3}t - \frac{x}{3}}$. If $x \geq (2- 3/\log 3)t$, then the number of cliques in $G'$ is less than $2^t$. 
\end{lem}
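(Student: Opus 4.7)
The plan is to exploit the very restrictive structure of $G'$ (whose complement is a matching) to compute the number of cliques in closed form, extract a linear inequality between $|V(G')|$ and the matching size from the $H$-minor-free hypothesis, and then optimize. Since $\overline{G'}$ has maximum degree at most one, it is a matching; write $n' = |V(G')|$ and let $m$ be its size. A subset of $V(G')$ is a clique in $G'$ iff it contains at most one vertex from each matched pair, so the number of cliques equals $3^m \cdot 2^{n'-2m}$.

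The main step is to show that if $G'$ contains $H$ as a minor then $n' \geq t + (m-x)/2$. Suppose $H$ is realized as a minor via branch sets $B_1, \dots, B_t$ using $n_0 \leq n'$ vertices, and let $m_0$ be the number of matched pairs fully contained in these $n_0$ vertices. Let $s$ be the number of singleton branch sets; since the other $t - s$ sets each have size at least two, $n_0 \geq s + 2(t-s) = 2t - s$. The crucial observation is that two branch sets fail to be adjacent in the minor quotient $G''$ only when both are singletons whose vertices form a matched pair in $\overline{G'}$; such ``singleton-singleton matched pairs'' therefore form a matching in $\overline{G''}$. Because $H \subseteq G''$, this matching embeds into $\overline{H}$ and so has size at most $x$, say $a \leq x$. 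Each of the remaining $m_0 - a \geq m_0 - x$ matched pairs contributes at least one endpoint to a non-singleton branch set, giving $n_0 - s \geq m_0 - x$. Adding the two inequalities yields $2n_0 \geq 2t + m_0 - x$; combining with $m_0 \geq m - (n' - n_0)$ and $n_0 \leq n'$ gives $n' \geq t + (m-x)/2$.

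With this inequality in hand, the $H$-minor-free hypothesis gives $n' \leq t + \lceil (m-x)/2 \rceil - 1$ when $m > x$ and $n' \leq t - 1$ when $m \leq x$. In the first regime the number of cliques is at most $2^{t - x/2 - 1/2}(3/2^{3/2})^m$, a strictly increasing function of $m$; the matching constraint $n' \geq 2m$ combined with the $n'$-bound forces $m \leq (2t - x - 1)/3$, at which the expression collapses to at most $3^{(2t-x-1)/3}$, strictly less than $3^{(2t-x)/3}$. In the second regime the bound is $2^{t-1}(3/4)^m \leq 2^{t-1}$; a routine comparison verifies $2^{t-1} < 3^{(2t-x)/3}$ whenever $x < t(2 - 3/\log 3) + 3/\log 3$ (which contains the regime $x < t(2 - 3/\log 3)$), and $2^{t-1} < 2^t$ trivially. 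Thus both bounds sit strictly below $3^{(2t-x)/3}$ when $x < t(2 - 3/\log 3)$ and strictly below $2^t$ when $x \geq t(2 - 3/\log 3)$. The main obstacle is the counting argument in the middle paragraph: correctly identifying that the ``missing edges'' of the minor quotient come exactly from singleton-singleton matched pairs (and are hence bounded by $x$), and marrying this with the elementary size bound $n_0 \geq 2t - s$. The subsequent optimization is routine extremal bookkeeping.
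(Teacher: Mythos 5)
Your closed-form count of the cliques ($3^m2^{\,n'-2m}$) and the final optimization are fine, and the inequalities you aim for are exactly the constraints of the paper's Lemma \ref{notminor}. The problem is that your key step is applied in the wrong logical direction. The branch-set/singleton argument you give proves: \emph{if} $H$ is a minor of $G'$, \emph{then} $n'\geq t+(m-x)/2$. From this together with the hypothesis that $G'$ is $H$-minor-free you cannot conclude $n'\leq t+\lceil(m-x)/2\rceil-1$ (or $n'\leq t-1$ when $m\leq x$); that deduction needs the converse implication, namely that whenever $n'$ exceeds the stated threshold one can actually \emph{construct} an $H$-minor in $G'$. This converse is the substance of the lemma, and as you have implicitly stated it it is even false in the regime $m<x$: take $G'=K_{t-1}$ (so $m=0$, $n'=t-1$) and any $H$ with $x\geq 2$; then $n'\geq t+(m-x)/2$, yet $H$ is not a minor of $G'$ because $G'$ has fewer than $t$ vertices. (The correct threshold there is $n'\geq t$, which is why Lemma \ref{notminor} splits into two cases.) What your necessity argument does establish is the tightness side of the story -- that the extremal complements of matchings are indeed $H$-minor-free -- not the upper bound on cliques.

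To close the gap you need the constructive direction, which is how the paper argues: when $m\geq x$, map the $2x$ vertices of $H$ covered by a maximum matching of $\overline{H}$ onto $x$ missing pairs of $G'$, pair to pair (harmless, since a minor only requires the \emph{edges} of $H$ to be present); observe that the remaining $t-2x$ vertices of $H$ form a clique, and build a $K_{t-2x}$-minor in the rest of $G'$, which is the complement of a matching of size $m-x$, by contracting matched (hence dominating) pairs as in Lemma \ref{mainlem1}, giving Hadwiger number $b+\lfloor\tfrac{3}{2}(m-x)\rfloor$ there; when $m<x$ one only needs $n'\geq t$ vertices in total. Negating these sufficient conditions under $H$-minor-freeness yields exactly the constraints you then optimize, and from that point your monotonicity computation (equivalent to the paper's LP relaxation) goes through. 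As written, however, the proposal never constructs a minor, so the step ``$H$-minor-free $\Rightarrow n'\leq t+\lceil(m-x)/2\rceil-1$ (resp.\ $n'\leq t-1$)'' is unjustified.
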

Tthe bounds in Lemma \ref{twocasesclique} are tight up to a factor $3$ given by considering a complement of a perfect matching or a clique of an appropriate order. The remarks after the proof of Lemma \ref{twocasesclique} provide more details. 

To prove Lemma \ref{twocasesclique}, we first characterize when $H$ can be a minor of $G'$. Notice that a graph $G'$ which has degree in the complement at most one consists of a complement of a perfect matching complete to a clique. So $G'$ has $2a+b$ vertices with the complement of the perfect matching on $2a$ vertices and the clique on $b$ vertices. 
\begin{lem}\label{notminor}
Let $H$ and $G'$ be the same as in Lemma \ref{twocasesclique}, where $G'$ has $2a+b$ vertices with the missing matching of $G'$ has $2a$ vertices and the clique has $b$ vertices. Then $G'$ is $H$-minor free if and only if 
\begin{align*}
&1.5a + b < t-0.5x \ ~~~\text{if} \ a \geq x,~\textrm{and}~\\
& 2a + b < t \ ~~~~~~~~~~~~~~\text{if} \ a < x .
\end{align*}
\end{lem}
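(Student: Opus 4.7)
The plan is to reduce everything to a small combinatorial packing problem on $G'$, exploiting that the non-edges of $G'$ form a matching of size $a$ (and every other pair, including every pair involving one of the $b$ clique vertices, is an edge). My first step would be to record the following structural observation: because the non-edges form a matching, for any two disjoint subsets $V_i, V_j \subseteq V(G')$ with $|V_i|+|V_j| \ge 3$ there is at least one edge of $G'$ between them, and any set of at least two vertices that is not one of the $a$ non-edge pairs induces a connected subgraph of $G'$. Consequently, in any $H$-minor model $\{V_1,\dots,V_t\}$, the only way two branch sets can fail to be joined by an edge of $G'$ is if both are singletons that happen to be a matching pair, and the only obstruction to connectedness is a size-$2$ branch set that coincides with a matching pair.

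With this observation in hand, I would set up a basic vertex count. Let $s$ denote the number of singleton branch sets and $n = t - s$ the number of non-singletons; since each non-singleton contains at least two vertices, $s + 2n \le 2a + b$, yielding $s \ge 2t - 2a - b$. For an upper bound on $s$, under any bijection of the $t$ branch sets to $V(H)$, each matching pair of $G'$ lying entirely inside the singletons must be mapped to a non-edge of $H$. Since $\overline{H}$ has matching number $x$, at most $x$ full matching pairs may lie in the singletons, so $s \le 2x + (a-x) + b = a + x + b$ when $a \ge x$; when $a < x$ this is vacuous and only the trivial bound $s \le 2a + b$ applies. Combining the lower and upper bounds on $s$ produces exactly the necessary inequalities stated in the lemma, completing the ``only if'' direction.

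For the converse I would exhibit an explicit minor model whenever the inequality holds. If $a < x$ and $t \le 2a + b$, simply take any $t$ vertices of $G'$ as singletons; their non-edges form a matching of size at most $a < x$, which fits inside a maximum matching of $\overline{H}$, so a suitable bijection of the singletons to $V(H)$ sends every edge of $H$ to an edge of $G'$. If $a \ge x$ and $1.5a + b \ge t - 0.5x$, take $2x$ singletons consisting of $x$ full matching pairs, one singleton from each of the remaining $a-x$ matching pairs, and all $b$ clique vertices as singletons, giving $s = a+x+b$ singletons. The $a-x$ leftover vertices are exactly the other halves of the split pairs, and since their matching partners are already singletons, they induce a clique in $G'$; I would pair $2(t-s)$ of them into size-$2$ connected non-singleton branch sets. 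Mapping the $x$ singleton matching pairs onto a maximum matching in $\overline{H}$ and extending arbitrarily finishes the construction.

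The main difficulty I expect is the bookkeeping in the sufficiency direction when $a-x$ is odd: one split-pair vertex will be left unpaired and must be absorbed into an existing size-$2$ branch set (turning it into size $3$), or paired with a clique vertex in place of that clique vertex being a singleton. Either way one must check that the total number of branch sets attained is precisely $\lfloor 1.5a + 0.5x \rfloor + b$, so that the strict inequality $1.5a + b < t - 0.5x$ is exactly the right threshold. The remaining verifications are routine: connectedness of each branch set, and the fact that whenever at least one of two branch sets is a non-singleton the combined size is at least $3$, so they are automatically joined by an edge of $G'$.
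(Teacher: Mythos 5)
Your proposal is correct, and it reaches the lemma by a route that differs from the paper's in the direction showing that a minor forces the inequalities. The paper first records that $\overline{H}$ is a matching of size $x$ plus isolated vertices (so $H$ minus the matched vertices is a clique on $t-2x$ vertices), then argues that one may as well place the $x$ missing edges of $H$ on missing edges of $G'$, reducing the question to whether the remaining complement-of-a-matching-plus-clique contains a clique minor of order $t-2x$, whose Hadwiger number $\lfloor\tfrac32(a-x)\rfloor+b$ was computed in Section 2.1; you instead work directly with an arbitrary $H$-model in $G'$ and double count: writing $s$ for the number of singleton branch sets, disjointness gives $s\ge 2t-2a-b$, while the observation that any missing pair of $G'$ lying entirely among the singletons must map to a non-edge of $H$ (and these images form a matching in $\overline{H}$, hence at most $x$ of them) gives $s\le a+x+b$ when $a\ge x$, and combining yields $1.5a+b\ge t-0.5x$; your explicit model for the converse (full pairs mapped to a maximum matching of $\overline{H}$, leftover pair-halves and clique vertices as singletons, surplus pair-halves merged into size-$2$ branch sets) is essentially the same construction as the paper's. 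What your counting buys is a fully self-contained justification of the step the paper treats as immediate, namely that an arbitrary model cannot beat the canonical one; what the paper's reduction buys is brevity, by reusing the Hadwiger-number computation already done for Lemma 2.1. Two small points: your labels for the two directions are swapped (the counting bound proves ``inequalities imply $H$-minor free'' via contraposition, the construction proves the converse), and the parity ``difficulty'' you anticipate is not one --- a minor model need not use every vertex of $G'$, so an odd leftover pair-half (or surplus singletons when $s>t$) can simply be omitted from the model rather than absorbed into a branch set; with that remark the verification that the threshold is exactly $\lfloor 1.5a+0.5x\rfloor+b$ branch sets is immediate and matches the strict inequality since $t$ is an integer.
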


\begin{proof}
The complement of $H$ has a maximum matching on $2x$ vertices. The remaining $t-2x$ vertices of $H$ is a clique as otherwise we can make an even bigger matching in the complement of $H$. 

Graph $H$ is a minor of $G'$ is equivalent to certain inequalities need to be satisfied.
When $a \geq x$, we can first embed the $x$ non-edges of the missing matching in $H$ into the $a$ missing edges of $G'$. Then $H$ is a minor of $G'$ is equivalent to embed a clique minor of order $t-2x$ into a graph on $2a+b-2x$ vertices which is a complement of a matching with size $a-x$. We have seen at the end of the proof of Lemma \ref{mainlem1} that the complement of a perfect matching of size $a-x$ has a maximum clique minor of size $\lfloor \frac{3}{2} (a-x)\rfloor$. Therefore, the Hadwiger number of $G'$ removing the missing matching on $2x$ vertices is at most $\lfloor \frac{3}{2}(a-x)\rfloor + b$. We need this number to be less than $t-2x$ as otherwise $H$ is a minor of $G'$. Thus,  in this case, $H$ is not a minor of $G'$ is equivalent to 
$t-2x > b+  \frac{3}{2}(a-x) $.
Rearranging the terms, we have
\beq
1.5a + b < t-0.5x. \nonumber
\eeq

Similarly, when $a <x$, we can first embed a missing matching on $2a$ vertices into the missing matching on $2a$ vertices in $G'$. Therefore, $H$ is a minor of $G'$ is equivalent to embed a missing matching with $x-a$ edges together with a clique on $t-2x$ vertices into a clique of size $b$. If $H$ is not a minor of $G'$, it means the total number of vertices is not enough, i.e., $2(x-a) + (t-2x) > b$. Rearranging, we have
\beq
2a + b < t.  \nonumber
\eeq
Thus we have obtained the necessary and sufficient conditions for $H$ not being a minor of $G'$ with the specified parameters $a, b$. 
\end{proof}

\begin{proof}[Proof of Lemma \ref{twocasesclique}]
Again, the number of vertices of $G'$ is $2a+b$ with the missing matching having $2a$ vertices and the clique having  $b$ vertices. 

The number of cliques in $G'$ is $3^a 2^b$ as for each non-adjacent pair of vertices in the missing matching, the two end vertices have $2^2-1=3$ possibilities of appearing in a clique since they cannot both appear. Thus the number of cliques in $G'$, by Lemma \ref{notminor}, is achieved by the following optimization problem (IP1) to the left. 

\begin{tabular}{p{6.5cm}p{0.1cm}p{6.5cm}}
{\begin{align}
& \text{(IP1)}   \ \  \max  \ 3^a 2^b = 2^{(\log 3) a  + b}  \nonumber \\ 
& \text{subject to:}  \nonumber \\
  &1.5a + b < t-0.5x  \ ~\text{if} \ a \geq x,~\textrm{and}~  \label{a>x}\\
 & 2a + b < t  ~~~~~~~~~~~~~~\text{if} \ a <x. \label{a<x} \\
 & a, b \geq 0, \ \ a, b \in \mathbb{Z}. \nonumber
\end{align}}
&
{\begin{align}
&  \nonumber \\
& \nonumber \\
& \xRightarrow[]{\text{\large{relaxation}}} \nonumber \\
& \nonumber
\end{align}}
&
 {\begin{align}
& \text{(LP1)}  \ \ \max  \ 3^a 2^b = 2^{(\log 3) a  + b}  \nonumber \\ 
& \text{subject to:}  \nonumber \\
&1.5a + b \leq t-0.5x  \ ~~~\text{if} \ a \geq x,~\textrm{and}~  \label{a>xrelax}\\
 & 2a + b \leq t  ~~~~~~~~~~~~~~~~\text{if} \ a < x. \label{a<xrelax} \\
 & a, b \geq 0, a,b \in \mathbb{R}. \nonumber
\end{align}}
\end{tabular}

Note that this is equivalent to maximizing $(\log 3)a+b$ under the same constraints. 
We relax (IP1) to another optimization problem (LP1)  above to the right by allowing $a, b$ to be real numbers and use the closure of the feasible region by replacing the strict inequalities (\ref{a>x}) and (\ref{a<x}) by inequalities (\ref{a>xrelax}) and (\ref{a<xrelax}). In other words, the optimal value of (IP1) is bounded above by the optimal value of the linear programming problem (LP1). 
When solving the linear system by using the constraint (\ref{a>xrelax}), we have that $b =0$ and $a \leq \frac{2t-x}{3}$, and thus $3^a 2^b \leq 3^{\frac{2}{3}t - \frac{x}{3}}.$ 
When solving the linear system by using the constraint (\ref{a<xrelax}) we have $a=0$ and $b \leq t$, and thus $3^a 2^b \leq 2^{t}.$ 
Therefore, by comparing the two bounds, the first bound is larger, i.e., $3^{\frac{2}{3}t - \frac{x}{3}} > 2^t$ when $x < (2- 3/\log 3)t$. The second bound is larger when otherwise. 
\end{proof}
\begin{rem}
We already saw that the optimal value of (IP1) is at most the optimal value of (LP1). We have already obtained the optimal value of (LP1). We show  that the optimal value of (IP1) is at least $\frac{1}{3}$ of the optimal value for (LP1). If the optimal solution of (LP1) is $(a^*, b^*)$, we have seen that $a^* = 0$ or $b^* = 0$. If $a^* = 0$, then the lattice point $(0, \lceil b^* \rceil - 1)$ is in the feasible region of (IP1) and the value of the objective functions in (IP1) for this point is $3^{a^*} 2^{\lceil b^* \rceil - 1} \geq \frac{1}{2} \left(3^{a^*}2^{b^*}\right)$, meaning that the value of (IP1) is at least half the optimal value of (LP1). Similarly, if $b^* = 0$, then the lattice point $(\lceil a^* \rceil - 1, 0)$ is in the feasible region of (IP1) and $ 3^{\lceil a^* \rceil - 1} 2^{b^*}\geq \frac{1}{3} \left(3^{a^*}2^{b^*}\right)$. Therefore the optimal value of (IP1) is within a factor $3$ of the optimal value of (LP1). This means that the bound in Lemma \ref{twocasesclique} is sharp up to a factor 3. 
\end{rem}

\subsection{Proof of Theorem \ref{generalmain}}
The proof of Theorem \ref{generalmain} has some similarities to the proof of the special case for cliques, Theorem \ref{main}, given in the previous section. We use the peeling process described in Subsection \ref{proofofmain} to peel off vertices of small degree, and show that they contribute only a small factor to the number of cliques. We end up with a dense graph. From the results obtained in the previous subsection, we understand how to maximize the number of cliques in the dense setting under the given constraints by Lemma \ref{twocasesclique}.

\begin{proof}[Proof of Theorem \ref{generalmain}]
We begin by bounding the number of cliques by the peeling process described in Subsection \ref{proofofmain}. Let $n_i$ denote the number of vertices in $G_i$. For a clique $K$ on $s$ vertices, let $r=r(K)$ be the least positive integer such that $n_r \leq 0.99t$ or $n_{r+1} \geq n_{r}-n_{r}^{1/4}$ or $r=s$. 

We first give a bound on $r$. Since $H$ has $t$ vertices and $G$ is $H$-minor free, it is also $K_t$-minor free. The result of Thomason \cite{Th1} implies, as $G$ is $K_t$-minor free, that every subgraph of it has a vertex of degree at most $d:=t\sqrt{\log t}$ (here we assume $t$ is sufficiently large). Hence $n_1 \leq d+1$. We have $n_i < n_{i-1}-n_{i-1}^{1/4}$ for each $2 \leq i \leq r$. In particular, if $j \leq r$ and $j-i \geq (n_i/2)^{3/4}$, then $n_j<n_i/2$. Indeed, otherwise we have $n_j<n_i-(j-i)(n_i/2)^{3/4} \leq n_i/2$, a contradiction. It follows that $$r \leq 1+\sum_{h \geq 0} 1+ ((d+1)/2^{h+1})^{3/4} \leq 2(t\sqrt{\log t})^{3/4}:= 2t^{3/4} (\log t)^{3/8}=r_0.$$

We next give a bound on the number of choices for $v_1,\ldots,v_r$. We have $n_0=n$ choices for $v_1$. We use the weak estimate that the number of choices for $v_2,\ldots,v_r$ having picked $v_1$ is at most 
${|G_1|-1 \choose \leq r_0} \leq r_0 \binom{t\sqrt{\log t}}{r_0} \leq r_0 \left(\frac{e t \sqrt{\log t}}{r_0}\right)^{r_0} \leq  r_0 t^{r_0} \left(   \frac{e\sqrt{\log t}}{r_0}\right)^{r_0}  \leq 2^{3t^{3/4}(\log t)^{11/8}}.$
We thus have at most $n2^{3t^{3/4}(\log t)^{11/8}}$ choices for $v_1,\ldots,v_r$. 

Recall that $G$ is an $H$-minor free graph on $n$ vertices and our goal is to bound the number of cliques in $G$. We have already bounded the number of choices for the first $r$ vertices, and it suffices to bound the number of choices for the remaining vertices. We split the cliques into three types: those with $n_r \leq 0.99t$, those with $r=s$ and $n_r > 0.99t$, and those with $r<s$, $n_r>0.99t$, and $n_{r+1} \geq n_r-n_r^{1/4}$. 

We first bound the number of cliques with $n_r \leq 0.99t$. As there are at most $0.99t$ possible remaining vertices to include after picking $v_1,\ldots,v_r$ for the clique, then there are at most $2^{0.99t}$ ways to extend these vertices to a  clique. We thus get at most $n2^{0.99t+3t^{3/4}(\log t)^{11/8}}$ cliques of the first type. 

We next bound the number of cliques with $r=s$. We saw that this is at most $n2^{3t^{3/4}(\log t)^{11/8}}$.

Finally, we bound the number of cliques with  $n_r \geq 0.99t$, $r<s$, and $n_{r+1} \geq n_r-n_r^{1/4}$. In this case, in $G_r$, $v_r$ has the minimum degree, and it and its non-neighbors are not in $G_{r+1}$, which has $n_{r+1}$ vertices. Thus, the maximum degree $\Delta$ of the complement of $G_r$ satisfies $\Delta < n_{r+1}-n_r \leq n_r^{1/4} \leq (t\sqrt{\log t})^{1/4} = t^{1/4}(\log t)^{1/8}$. 

Let $\hat G$ be a minor of $G_r$ which has the most cliques, so $\hat G$ is also $H$-minor free and is a social graph. By the definition of $\hat G$, we have $c(\hat G) \geq c(G_r)$. It is clear that the maximum missing degree in $\hat G$ is at most that of $G_r$, which is $\Delta$. Note that we may assume $\hat G$ is obtained from $G_r$ only by edge contractions (no edge or vertex deletions) as avoiding the deletions cannot decrease the number of cliques. By Lemma \ref{nondeg}, removing at most $600\Delta^3$ vertices in $\hat G$, the remaining induced subgraph $G'$ of $\hat G$ has degree in the complement at most one. Lemma \ref{twocasesclique} tells us that the total number of cliques in $G'$ is at most 
$3^{\frac{2}{3}t - \frac{x}{3}}$ when $x < (2- 3/\log 3)t$; and $2^t$ when $x \geq (2- 3/\log 3)t$. There are at most $600\Delta^3$ vertices in $\hat G \setminus G'$. Thus in $\hat G$, the number of cliques is at most $2^{600\Delta^3}$ times the number of cliques in $G'$. Therefore, in $\hat G$, there are at most $3^{\frac{2}{3}t - x/3 + 600\Delta^3} \leq 3^{2t/3 - x/3+ 600t^{3/4}(\log t)^{3/8}}$ cliques when $x < (2- 3/\log 3)t$; and at most $2^{t+600\Delta^3}=2^{t + 600t^{3/4}(\log t)^{3/8}}$ cliques when $x \geq (2- 3/\log 3)t$. Recall that the $c(\hat G)$ is no less than $c(G_r)$. 
We thus obtained an upper bound on the number of ways of completing a clique in $G$ having picked the first $r$ vertices. We thus get at most $n2^{r_0}3^{2t/3- x/3+600t^{3/4}(\log t)^{3/8}}=n3^{2t/3 - x/3 + t^{3/4+o(1)}}$ cliques of the last type if $x < (2- 3/\log 3)t$, and at most $n2^{r_0}2^{t+600t^{3/4}(\log t)^{3/8}}=n2^{t+t^{3/4+o(1)}}$ cliques of the last type if otherwise. 

Adding up all possible cliques, we get at most $3^{2t/3- x/3+t^{3/4 + o(1)}}n$ cliques if $x < (2- 3/\log 3)t$, and at most $2^{t + t^{3/4+o(1)}} n$ cliques if $x \geq (2- 3/\log 3)t$. In the first case, the bound is tight up to the error term by considering the disjoint union of perfect matchings of size just less than $2t/3-x/3$. In the second case, the bound is tight by considering up to the error term by considering a disjoint union of cliques of order $t-1$. 
\end{proof}

We next show how to use the previously obtained results in order to establish the maximum number of cliques a $K_t$-minor free graph on $n$ vertices can have when $t \leq n < 4n/3$. 

\begin{thm}
\label{main7} 
Every $K_t$-minor free graph on $n$ vertices with $t \leq n < 4t/3$ has at most $2^{4t-3n+2(n-t)\log_2 3 +o(t)}$ cliques.  
\end{thm}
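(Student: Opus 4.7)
The plan is to adapt the argument used for Theorem \ref{generalmain} specialized to $H = K_t$, and sharpen the final step using the assumption $n < 4t/3$. First, run the peeling process from Subsection \ref{proofofmain} on $G$ in the same way as in the proof of Theorem \ref{generalmain}. Since $n < 4t/3 = O(t)$, the cost $n \cdot 2^{o(t)}$ of selecting the prefix $v_1, \ldots, v_r$ is absorbed into $2^{o(t)}$. Split cliques into three types by the behavior of $n_r$: when $n_r \leq 0.99t$ we get at most $2^{0.99t + o(t)}$ cliques, and when $r = s$ the clique was fully peeled and we get at most $2^{o(t)}$. Both are dominated by the target bound, since $4t - 3n + 2(n - t)\log_2 3 = (4 - 2\log_2 3)t + (2\log_2 3 - 3)n \geq t$ for every $n \geq t$.

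In the third (dense) case, pass to a minor $\hat G$ of $G_r$ with the most cliques. Then $\hat G$ is $K_t$-minor free, social, satisfies $|\hat G| \leq n_r \leq n$, and has maximum missing degree $\Delta \leq n_r^{1/4} = o(t^{1/3})$. Applying Lemma \ref{nondeg} removes at most $600\Delta^3 = o(t)$ vertices of $\hat G$ and leaves an induced subgraph $G'$ of maximum missing degree at most one, i.e.\ $G'$ is the complement of a (not necessarily perfect) matching; $G'$ is still $K_t$-minor free, $|G'| \leq n$, and contributes $c(G_r) \leq c(\hat G) \leq 2^{o(t)}\, c(G')$ to the count.

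It remains to bound $c(G')$ sharply using the extra constraint $|G'| \leq n$. Writing $G'$ as the complement of a matching on $2a$ vertices together with a clique on $b$ vertices, we have $c(G') = 3^a 2^b$ and $2a + b = |G'| \leq n$. By Lemma \ref{notminor} with $x(K_t) = 0$, the $K_t$-minor freeness of $G'$ translates into $\lfloor 3a/2 \rfloor + b \leq t - 1$. Maximizing $a \log_2 3 + b$ over $(a,b) \in \mathbb{R}_{\geq 0}^{2}$ subject to these two inequalities (dropping floors at an additive cost $O(1)$) is a two-variable LP whose vertices in the first quadrant are $(0,0)$, $(0, t-1)$, $(n/2, 0)$, $(2(t-1)/3, 0)$, and $(2(n-t+1),\, 4t-3n-4)$. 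The informative vertex is the last one, the intersection of the two binding inequalities, which is feasible precisely when $n \leq (4t-4)/3$ and realizes the objective $4t - 3n + 2(n-t)\log_2 3 + O(1)$; in the remaining thin range $n \in ((4t-4)/3,\, 4t/3)$ the LP maximum falls on $(2(t-1)/3, 0)$ with value $(2t/3)\log_2 3 + O(1)$, which still matches $4t - 3n + 2(n-t)\log_2 3$ up to an additive constant. Hence $c(G') \leq 2^{4t - 3n + 2(n-t)\log_2 3 + O(1)}$, and multiplying by the $2^{o(t)}$ factors from the peeling and from Lemma \ref{nondeg} yields the claimed bound. The main subtlety is the LP boundary analysis (in the spirit of the remark following Lemma \ref{twocasesclique}) together with tracking the inequality $|G'| \leq n$ through the peeling, contracting, and vertex-removal steps; no new structural lemmas beyond those used for Theorem \ref{generalmain} are needed.
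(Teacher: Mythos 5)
Your proposal is correct and follows essentially the same route as the paper: peel to a dense remainder, pass to a social minor, apply Lemma \ref{nondeg} to reduce to a complement-of-a-matching graph $G'$ with $|G'|\le n$, and maximize $3^a2^b$ subject to $2a+b\le n$ and the Hadwiger constraint $1.5a+b< t$, whose optimum at $\bigl(2(n-t+1),\,4t-3n-4\bigr)$ gives the stated exponent. The differences are cosmetic: the paper peels until the maximum missing degree is at most $t^{1/4}$ (using $n=O(t)$ to bound the peeled prefix directly) and, instead of writing out the two-variable LP, substitutes the minimal admissible missing matching size $x'=2(n'-t)+1$ and uses monotonicity in $n'$, which is the same optimization you carry out.
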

\begin{proof}
We apply the peeling process until we arrive at an induced subgraph with maximum missing degree $\Delta$ at most $t^{1/4}$. In each step, we remove a vertex with all all its non-neighbors, whose cardinality is at least $t^{1/4}$ before the peeling process terminates. Thus there are at most $n/t^{1/4}$ vertices in a clique before arriving at the dense induced subgraph. Hence, we have at most $${n \choose \leq n/t^{1/4}} \leq (et^{1/4})^{n/t^{1/4}}=2^{t^{3/4+o(1)}}$$
choices for the vertices of a clique before we arrive at the dense induced subgraph with maximum missing degree $\Delta \leq t^{1/4}$. By Lemma \ref{nondeg}, all but at most $600\Delta^3 \leq 600t^{3/4}$ vertices have missing degree at most one. These at most $600t^{3/4}$ vertices contribute a factor at most $2^{600t^{3/4}} \leq 2^{t^{3/4+o(1)}}$ to the number of cliques. The remaining induced subgraph has $n'<n$ vertices, no $K_t$-minor, and has maximum missing degree at most one. Let $x'$ be the size of the maximum missing matching. So the number of cliques in this induced subgraph is $3^{x'}2^{n'-2x'}$. This is maximized given $n'$ if $x'$ is as small as possible so that it is still $K_t$-minor free, which is when $x'=2(n'-t)+1$. The number of cliques in this is case is $2^{4t-3n'-2+(2n'-2t+1)\log_2 3}$. This is an increasing function of $n'$, which is at most $n$, and thus we get an upper bound when we substitute $n$ for $n'$. With the factors we get from the peeling process and the few vertices in the remaining induced subgraph that have missing degree more than one, in total we get at most $2^{4t-3n+2(n-t)\log_2 3+o(t)}$ cliques, completing the proof.  
\end{proof}

\section{Counting cliques in a graph in a minor-closed family}\label{secfamily}
In this section we generalize the result in Theorem \ref{generalmain} further to prove Theorem \ref{family} bounding the number of cliques in a graph on $n$ vertices that belongs to a minor-closed family of graphs that is closed under disjoint union. As in the previous sections, we determine the exponential constant.  

\begin{proof}[Proof of Theorem \ref{family}]
Similar to the previous section, we first apply the peeling process with the same parameters to reduce to a dense graph $G_r$ whose maximum missing degree $\Delta$ is at most $t_1^{1/4+o(1)}$. Let $\hat G$ be the minor of $G_r$ with the most cliques. By  Lemma \ref{nondeg}, we know that in $\hat G$ after removing at most $600\Delta^3$ vertices, the remaining induced subgraph of $\hat G$, denoted by $G'$, has degree in the complement at most one. Since $\mathcal{G}$ is minor-closed, then $G'$ is also in $\mathcal{G}$, i.e., it forbids $H_1,\ldots,H_{\ell}$ as minors.

To bound the number of cliques in $G'$, we use the same argument as in Lemma \ref{twocasesclique}. Suppose $G'$ is the complement of a matching on $2a+b$ vertices consisting of the complement of a perfect matching on $2a$ vertices which is complete to a clique on $b$ vertices, so $G'$ has $3^a2^b$ cliques. We want that $H_i$ is not a minor in $G'$ for each $1 \leq i \leq \ell$. Lemma \ref{notminor} tells us we need, for each $1 \leq i \leq \ell$, that 
$1.5a + b < t_i-0.5x_i$ if $a \geq x_i$ and 
$2a + b < t_i$ if $ a \leq x_i$. 
Therefore when bounding the maximum number of cliques in $G'$, similar to what we did in Lemma \ref{twocasesclique}, we want to solve the following optimization problem (IP2) below on the left. We relax it to a related linear programming problem (LP2) on the right below. Note that this is indeed a linear program as it is equivalent to maximizing $(\log_2 3) a  + b$ under the same conditions.

\begin{tabular}{p{6.8cm}p{0.1cm}p{6.8cm}}
{\begin{align}
& \text{(IP2)}  \ \  \max  \ 3^a 2^b = 2^{(\log 3) a  + b}  \label{obji}\\ 
& \text{subject to:} \nonumber\\
& \text{For each}\ 1 \leq i \leq \ell: \nonumber  \\
 & \ \ 1.5a + b < t_i-0.5x_i  \ \text{if} \ a \geq x_i,~\textrm{and}~  \label{con1i}\\
 & \  \ 2a + b < t_i  ~~~~~~~~~~~~~\text{if} \ a <x_i. \label{con2i} \\
 &  a, b \geq 0, \ \ a, b \in \mathbb{Z}. \nonumber
\end{align}}
&
{\begin{align}
&  \nonumber \\
& \nonumber \\
& \xRightarrow[]{\text{\large{relaxation}}} \nonumber \\
& \nonumber
\end{align}}
&
{\begin{align}
& \text{(LP2)}  \ \  \max  \ 3^a 2^b = 2^{(\log 3) a  + b}  \label{obj}\\ 
& \text{subject to:} \nonumber\\
& \text{For each}\ 1 \leq i \leq \ell: \nonumber  \\
 & \ \ 1.5a + b \leq t_i-0.5x_i  \ \text{if} \ a \geq x_i,~\textrm{and}~  \label{con1}\\
 & \  \ 2a + b \leq t_i  ~~~~~~~~~~~~~\text{if} \ a < x_i. \label{con2} \\
 &  a, b \geq 0, \ \ a, b \in \mathbb{R}. \nonumber
\end{align}}
\end{tabular}


The optimal value in (IP2) is bounded above by the optimal value in (LP2). 
To solve the linear program (LP2), it is helpful to visually work out each constraint. The inequality given in (\ref{con2}) (for each $i$) is the region given by $b \leq t_i - 2a$ when $0 \leq a < x_i$. The closure of this region is on or below the blue line segment $AF$ in the closed first quadrant in Figure \ref{convi}. 
The constraint (\ref{con1}) is defined by $a \geq x_i$ and the line $b \leq (t_i - 0.5x_i) - 1.5a$ in the closed first quadrant. This region is on or below the green line segment $FD$ in the closed first quadrant. 
The slope of the green line $b \leq (t_i - 0.5x_i) - 1.5a$ has magnitude smaller than the blue line. 
Therefore the feasible region of (LP2) for each $i$ can be visualized as the shaded region in Figure \ref{convi}, the region in the closed first quadrant below the piecewise linear curve (with one bend) with segments $AF$ and $FD$.

\begin{figure}[h]
\centering
\definecolor{uuuuuu}{rgb}{0.26666666666666666,0.26666666666666666,0.26666666666666666}
\definecolor{qqzzqq}{rgb}{0.,0.6,0.}
\definecolor{qqqqff}{rgb}{0.,0.,1.}
\definecolor{ttqqqq}{rgb}{0.2,0.,0.}
\begin{tikzpicture}[line cap=round,line join=round,>=triangle 45,x=2cm,y=1.2cm]
\draw[->,color=black] (-0.2,0.) -- (3,0.);
\foreach \x in {,1.,2.,3.}
\draw[->,color=black] (0.,-0.2) -- (0.,5.5);
\foreach \y in {,2.,4..}
\draw[color=black] (0pt,-8pt) node[right] {\footnotesize $0$};
\clip(-1,-0.5) rectangle (10,5.5);
\fill[color=ttqqqq,fill=ttqqqq,pattern=north east lines,pattern color=ttqqqq] (0.,5.) -- (0.,0.) -- (2.6666666666666665,0.) -- (2.,1.) -- cycle;
\draw [line width=1.2pt,color=qqqqff] (0.,5.)-- (2.5,0.);
\draw [line width=1.2pt,color=qqzzqq] (0.,4.)-- (2.6666666666666665,0.);
\draw [line width=1.2pt,color=qqqqff] (2.67958,5.11)-- (2.29525,5.11);
\draw [line width=1.2pt,color=qqzzqq] (2.67958,4.67)-- (2.29525,4.67);
\draw (2.7,5.3) node[anchor=north west] {\footnotesize{Line $b \leq t_i - 2a$}};
\draw (2.7,4.9) node[anchor=north west] {\footnotesize{Line $b \leq (t_i - 0.5x_i) - 1.5a$}};
\draw (3,0.053536600751215524) node[anchor=north west] {$a$};
\draw (-0.2,5.5) node[anchor=north west] {$b$};
\draw (-0.2,4.281229985490518) node[anchor=north west] {\footnotesize$C: $};
\draw (-0.92,4.1) node[anchor=north west] {\footnotesize$(0,t_i - 0.5x_i)$};
\draw (1.8,-0.07) node[anchor=north west] {\footnotesize$x_i$};
\draw (2.2,-0.03) node[anchor=north west] {\footnotesize$B:(\frac{t_i}{2},0)$};
\draw (2.,2.65848)-- (2.,-0.1);
\begin{scriptsize}
\draw [fill=ttqqqq] (0.,5.) circle (1.0pt);
\draw[color=ttqqqq] (0.32,5) node {$A:(0, t_i)$};
\draw [fill=ttqqqq] (2.5,0.) circle (1.0pt);
\draw [fill=ttqqqq] (0.,4.) circle (1.0pt);
\draw [fill=ttqqqq] (2.6666666666666665,0.) circle (1.0pt);
\draw[color=ttqqqq] (3.1,0.2) node {$D:(\frac{t_i - 0.5x_i}{1.5}, 0)$};
\draw [fill=uuuuuu] (0.,0.) circle (1.5pt);
\draw [fill=uuuuuu] (2.,1.) circle (1.5pt);
\draw[color=uuuuuu] (2.6,1.) node {$F:(x_i, t_i - 2x_i)$};
\end{scriptsize}
\end{tikzpicture}
\caption{Constraints (\ref{con1}) and (\ref{con2}) visualization.  \footnotesize{The blue line $b \leq t_i - 2a$ coming from (\ref{con2}) intersects the axis $a = 0$ at $A: (0, t_i)$ and  intersects the axis $b = 0$ at $B: (t_i/2, 0)$. The green line $b \leq (t_i - 0.5x_i) - 1.5a$ coming from (\ref{con1}) intersects the axis $a = 0$ at $C: (0, t_i - 0.5x_i)$ and intersects the axis $b = 0$ at point $D: (\frac{t_i - 0.5x_i}{1.5}, 0)$. $F$ is the intersection of the two lines and it can be easily solved that $F = (x_i, t_i - 2x_i)$. }} \label{convi}
\end{figure}
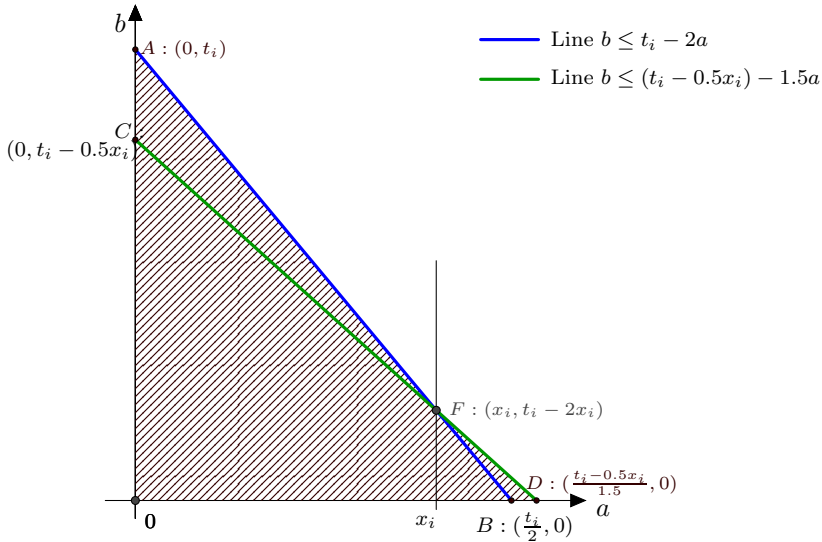

\begin{figure}[H]
\centering
\definecolor{ttqqqq}{rgb}{0.2,0.,0.}
\begin{tikzpicture}[line cap=round,line join=round,>=triangle 45,x=1.0cm,y=0.6cm]
\draw[->,color=black] (-0.2,0.) -- (4,0.);
\foreach \x in {-1.,1.,2.,3.,4.,5.,6.,7.}
\draw[->,color=black] (0.,-0.2) -- (0.,7);
\foreach \y in {,2.,4.,6.,8.}
\draw[color=black] (0pt,-7pt) node[right] {\footnotesize $0$};
\clip(-0.3,-0.4) rectangle (7,7);
\fill[color=ttqqqq,fill=ttqqqq,pattern=north east lines,pattern color=ttqqqq] (0.,6.) -- (0.5657349737952752,3.5552624345756887) -- (1.2489985511268813,2.8292948836608587) -- (1.4198144454597827,1.9111594516215151) -- (2.6262016991859,0.8222081252492706) -- (2.7329616331439635,0.3097604422505672) -- (3.341493256704925,0.) -- (0.,0.) -- cycle;
\draw (0.,6.)-- (0.5657349737952752,3.5552624345756887);
\draw (0.5657349737952752,3.5552624345756887)-- (1.2489985511268813,2.8292948836608587);
\draw (1.2489985511268813,2.8292948836608587)-- (1.4198144454597827,1.9111594516215151);
\draw (1.4198144454597827,1.9111594516215151)-- (2.6262016991859,0.8222081252492706);
\draw (2.6262016991859,0.8222081252492706)-- (2.7329616331439635,0.3097604422505672);
\draw [color=ttqqqq] (0.,6.)-- (0.5657349737952752,3.5552624345756887);
\draw [color=ttqqqq] (0.5657349737952752,3.5552624345756887)-- (1.2489985511268813,2.8292948836608587);
\draw [color=ttqqqq] (1.2489985511268813,2.8292948836608587)-- (1.4198144454597827,1.9111594516215151);
\draw [color=ttqqqq] (1.4198144454597827,1.9111594516215151)-- (2.6262016991859,0.8222081252492706);
\draw [color=ttqqqq] (2.6262016991859,0.8222081252492706)-- (2.7329616331439635,0.3097604422505672);
\draw [color=ttqqqq] (2.7329616331439635,0.3097604422505672)-- (3.341493256704925,0.);
\draw [color=ttqqqq] (3.341493256704925,0.)-- (0.,0.);
\draw [color=ttqqqq] (0.,0.)-- (0.,6.);
\begin{scriptsize}
\draw [fill=ttqqqq] (0.,6.) circle (2.5pt);
\draw[color=ttqqqq] (0.07463927758818326,6.373724691068557) node {$A$};
\draw[color=ttqqqq] (-0.1,6.8) node[left] {$b$};
\draw [fill=ttqqqq] (0.5657349737952752,3.5552624345756887) circle (2.5pt);
\draw[color=ttqqqq] (0.6404669275659196,3.9395981968247162) node {$B$};
\draw [fill=ttqqqq] (1.2489985511268813,2.8292948836608587) circle (2.5pt);
\draw[color=ttqqqq] (1.3237305048975256,3.2136306459098862) node {$C$};
\draw [fill=ttqqqq] (1.4198144454597827,1.9111594516215151) circle (2.5pt);
\draw[color=ttqqqq] (1.4945463992304273,2.295495213870543) node {$D$};
\draw [fill=ttqqqq] (2.6262016991859,0.8222081252492706) circle (2.5pt);
\draw[color=ttqqqq] (2.7009336529565444,1.2065438874982979) node {$E$};
\draw [fill=ttqqqq] (2.7329616331439635,0.3097604422505672) circle (2.5pt);
\draw[color=ttqqqq] (2.807693586914608,0.6940962044995947) node {$F$};
\draw [fill=ttqqqq] (3.341493256704925,0.) circle (2.5pt);
\draw[color=ttqqqq] (3.4162252104755697,0.37381640262540516) node {$G$};
\draw[] (4,0) node[below] {$a$};
\draw [fill=ttqqqq] (0.,0.) circle (1.5pt);
\end{scriptsize}
\end{tikzpicture}
\caption{Illustration of the feasible region (polygon) of (LP2). \footnotesize{The lower envelope $\mathcal{P}$ is the union of the closed line segments $AB, BC, CD, DE, EF,FG$.} }\label{pol}
\end{figure}
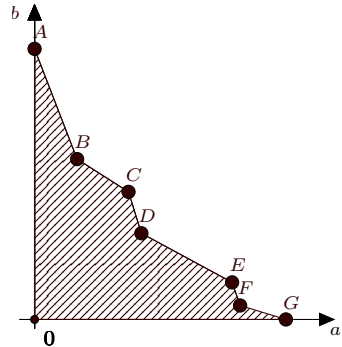

After intersecting all the shaded areas bounded by the one-bended piecewise linear curves for each graph $H_i$, we obtain the feasible region of (LP2), which is a polygon of the type as in Figure \ref{pol}. There are only two slopes, $-1.5$ and $-2$, in the non-axis line segments bounding this polygon. We call the closure of this set of one-bended line segments which bounds the resulting polygon the \emph{lower envelope $\mathcal{P}$}. For example, in Figure \ref{pol}, the lower envelope is the union of the closed line segments $AB, BC, CD, DE, EF,FG$. Note that not all the bended line segments for each $H_i$ should participate in the lower envelope: a bended line segment can be always strictly higher than the lower envelope and thus will not be in the intersection of all the shaded areas for $H_i$'s.  Since our objective function has slope in between the two slopes appearing in the lower envelope (as $1.5 < \log_2 3 < 2$), we know that the optimal solution should appear only at an extreme point of the lower envelope. For example, in Figure \ref{pol}, the extreme points of the lower envelope $\mathcal{P}$ are $A,B,C,D,E,F,G$. In contrast to the case $\ell=1$ considered in the previous section, the optimal solution can appear in an extreme point which is not on the axises (so neither $a = 0$ nor $b=0$), and we will show an example soon. In summary, the solution of (LP2) is
\beq 
\max_{(a_j, b_j)~\text{an extreme point of the lower envelope $\mathcal{P}$}} 2^{(\log_2 3)a_j +b_j}.     \label{linearsys}
\eeq

To see that the optimal solution of (IP2), which gives us the maximum number of cliques in $G'$, is within a factor $6$ of the optimal solution of (LP2), let the point $p = (a^*, b^*)$ be the optimal solution of (LP2). The feasible region for (IP2) are exactly the lattice points of the feasible region of (LP2) excluding the lattice points on the lower envelope $\mathcal{P}$. Therefore the closest lattice point $p' < p$ but $p' \neq p$ is in the feasible region of (IP2). Since both coordinates of $p'$ differ from $p$ by at most 1, by plugging in $p'$ into the objective function in (IP2), it gives a value at least $\frac{1}{6}$ the optimal value for (LP2). Since the optimal value for (IP2) is at least the value by plugging in $p'$, we have that the optimal value for (IP2) is within a factor $6$ of the optimal value for (LP2). Thus the bound we achieved by solving (LP2) instead is tight up to a factor $6$. 

We now bound the number of cliques a graph $G \in \mathcal{G}$ on $n$ vertices can have. The argument is almost the same as in the proof of Theorem \ref{generalmain}. Thus we omit the repetitive details. The number of cliques in $G'$ we have shown is at most (\ref{linearsys}), and the same argument as in Theorem \ref{generalmain} shows that the number of cliques in the graph $G$ is at most $2^{t_1^{3/4+o(1)}}n$ times this bound.

Tightness of the bound comes from taking a disjoint union of copies of a graph $G'$ on $2a'+b'$ vertices which is the complement of a matching of size $a'$, where $p'=(a',b')$ is the lattice point in the feasible region defined earlier. 
\end{proof}

When there is only one connected graph forbidden as a minor, as we have seen in Section \ref{section3}, the optimal solution for the relaxed system (LP1) satisfies either $b=0$ or $a=0$. To relate to (IP1), this means that an optimal construction of a graph forbidding $H$ as minor and with the maximum number of cliques is close to being a complement of a perfect matching or a clique. However, for the general case, when there are multiple forbidden minors, the optimal construction can be far from either. 
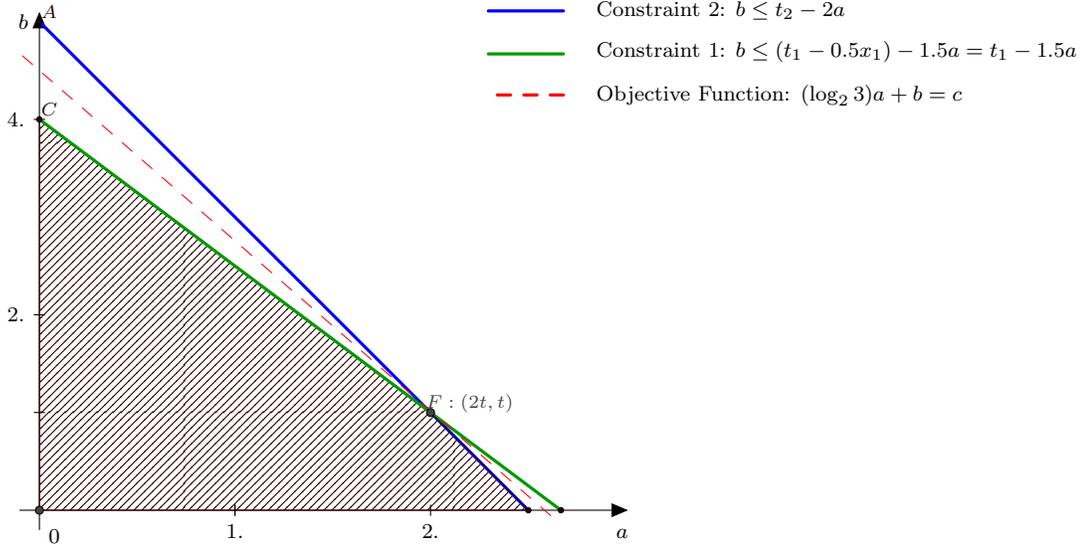
\begin{figure}[H]
\centering
\definecolor{ffqqqq}{rgb}{1.,0.,0.}
\definecolor{uuuuuu}{rgb}{0.26666666666666666,0.26666666666666666,0.26666666666666666}
\definecolor{qqzzqq}{rgb}{0.,0.6,0.}
\definecolor{qqqqff}{rgb}{0.,0.,1.}
\definecolor{ttqqqq}{rgb}{0.2,0.,0.}
\begin{tikzpicture}[line cap=round,line join=round,>=triangle 45,x=2.6cm,y=1.3cm]
\draw[->,color=black] (-0.1,0.) -- (3.01,0.);
\foreach \x in {1.,2.,}
\draw[shift={(\x,0)},color=black] (0pt,2pt) -- (0pt,-2pt) node[below] {\footnotesize $\x$};
\draw[->,color=black] (0.,-0.2) -- (0.,5.1);
\foreach \y in {,2.,4.}
\draw[shift={(0,\y)},color=black] (2pt,0pt) -- (-2pt,0pt) node[left] {\footnotesize $\y$};
\draw[color=black] (0pt,-10pt) node[right] {\footnotesize $0$};
\draw[color=black] (2.98, -0.1) node[below] {\footnotesize $a$};
\draw[color=black] (-0.01, 5.) node[left] {\footnotesize $b$};
\clip(-0.8,-0.2) rectangle (10,6);
\fill[color=ttqqqq,fill=ttqqqq,pattern=north east lines,pattern color=ttqqqq] (0.,4.) -- (0.,0.) -- (2.5,0.) -- (2.,1.) -- cycle;
\draw [line width=1.2pt,color=qqqqff] (0.,5.)-- (2.5,0.);
\draw [line width=1.2pt,color=qqzzqq] (0.,4.)-- (2.6666666666666665,0.);
\draw [color=ttqqqq] (0.,4.)-- (0.,0.);
\draw [color=ttqqqq] (0.,0.)-- (2.5,0.);
\draw [color=ttqqqq] (2.5,0.)-- (2.,1.);
\draw [color=qqzzqq] (2.,1.)-- (0.,4.);
\draw [dash pattern=on 5pt off 5pt,color=ffqqqq] (-0.086242997434229,4.65)-- (2.64,-0.1);
\draw [line width=1.2pt,color=qqqqff] (2.67958,5.11)-- (2.29525,5.11);
\draw [line width=1.2pt,color=qqzzqq] (2.67958,4.67)-- (2.29525,4.67);
\draw [line width=1.2pt,dash pattern=on 5pt off 5pt,color=ffqqqq] (2.67958,4.25)-- (2.29525,4.25);
\draw (2.8,5.3) node[anchor=north west] {\footnotesize{Constraint 2: $b \leq t_2 - 2a$}};
\draw (2.8,4.9) node[anchor=north west] {\footnotesize{Constraint 1: $b \leq (t_1 - 0.5x_1) - 1.5a = t_1 - 1.5a$}};
\draw (2.8,4.45) node[anchor=north west] {\footnotesize{Objective Function: $(\log_2 3 )a + b = c$} };
\begin{scriptsize}
\draw [fill=ttqqqq] (0.,5.) circle (1.0pt);
\draw[color=ttqqqq] (0.05,5.1) node {$A$};
\draw [fill=ttqqqq] (2.5,0.) circle (1.0pt);
\draw [fill=ttqqqq] (0.,4.) circle (1.0pt);
\draw[color=ttqqqq] (0.05,4.1) node {$C$};
\draw [fill=ttqqqq] (2.6666666666666665,0.) circle (1.0pt);
\draw [fill=uuuuuu] (0.,0.) circle (1.5pt);
\draw [fill=uuuuuu] (2.,1.) circle (1.5pt);
\draw[color=uuuuuu] (2.2,1.1) node {$F: (2t, t)$};
\end{scriptsize}
\end{tikzpicture}
\caption{Example of Forbidding $H_1, H_2$. \footnotesize{Constraint $i$ is the feasible region coming from $H_i$. The intersection of the two feasible regions give the feasible region for (LP2). The objective function is optimized when the appropriate $c$ is chosen such that the red dashed line goes through $F$, the intersection of the blue and green lines.}} \label{example}
\end{figure}
We will consider and example with $\ell=2$ forbidden minors. Let $H_1$ be a clique on $t_1 = 4t$ vertices, so $x_1 = 0$. Let $H_2$ be a graph on $t_2 = 5t$ vertices with $x_2 =  t_2/2 = 2.5t$, i.e., $H_2$ is a complement of a perfect matching on $5t$ vertices. We next consider the integer program (IP2) and the linear program (LP2) in this case. The objective function (\ref{obj}) translates into maximizing $(\log_2 3)a +b$, which is of some constant value $c$ on each line parallel to the red line in Figure \ref{example}. For $H_1$, the constraint (\ref{con1}) is: when $a \geq x_1 =0$, we need $1.5a + b < t_1  - 0.5x_1 = 4t$, which is equivalent to $b < 4t - 1.5a$. Constraint (\ref{con2}) can be ignored here since (\ref{con2}) requires $a < x_1 = 0$. Therefore for $H_1$ we only have a straight line segment $b < 4t - 1.5a$ for $a, b\geq 0$. This is shown as the green line in Figure \ref{example}. For $H_2$, the constraint (\ref{con2}) which is for when $a < x_1 =5t$ is $2a + b < t_2 = 5t$, which is equivalent to $b < 5t-2a$. This line intersects $b=0$ line at $(2.5t, 0)$. This point is also the starting point for the line segment shown in (\ref{con1}). Therefore the constraint for $H_2$ is again a straight line segment as shown as the blue line in Figure \ref{example}. The two constraints  of $H_1, H_2$ intersect at point $F=(2t,t)$. The feasible region for (IP2), i.e., the region where $(a, b)$ satisfy both constraints is the lattice points in the shaded area bounded by both constraints and the closed first quadrant excluding the lower envelope. 
Since the objective function has slope in between the slopes given by these two constraints, we know that the objective function (\ref{obj}) of our linear program is maximized at $(2t,t)$ and the objective function (\ref{obji}) of our integer program is maximized at $(2t-1,t+1)$. This corresponds to a graph $G'$ on $2a+b=2(2t-1)+t+1=5t-1$ vertices which is the complement of a matching of size $a=2t-1$. Therefore, among all graphs on $n$ vertices which avoids $H_1$ and $H_2$ as minors, the graph which is a disjoint union copies of $G'$ has nearly the maximum number of cliques, which is  $2^{(2\log_2 3 + 1 + o(1))t}n$.

\section{Concluding remarks} 

We determined the number of cliques in a graph on $n$ vertices with no $K_t$-minor up to a factor $2^{o(t)}$. It would be interesting to determine the exact value, as has been done by Wood \cite{Wo1} for $t \leq 9$. As observed by Wood, for small values of $t$, the extremal example is a graph formed from a $K_{t-2}$ by adding vertices one at a time whose neighorhood in the graph so far is a $K_{t-2}$. Such a graph on $n$ vertices has $2^{t-2}(n-t+3)$ cliques. Wood further conjectures that this bound is tight if and only if $t \geq 49$. It is interesting that for large values of $t$, the extremal graph is very different, coming from the complement of a perfect matching on roughly $4t/3$ vertices instead of from a clique on $t-1$ vertices. It seems plausible that the methods developed here could be useful for solving this problem for large $t$. 

We further extended our result by replacing $K_t$ by any connected graph $H$. It looks interesting and challenging to solve the same problem when $H$ is not connected. We further generalized our result to determine up to a small factor  the maximum number of cliques a graph on $n$ vertices can have in a minor-closed family which is closed under disjoint union. Again, it is an interesting challenge to determine the value exactly or remove the condition that the graph is closed under disjoint union. 

Another interesting problem proposed by Wood \cite{Wo1} is to determine the maximum number of cliques of order $k$ in a $K_t$-minor free graph on $n$ vertices. If $k \geq t$, the answer is clearly $0$. For $k=2$, this asks for the maximum number of edges a $K_t$-minor free graph on $n$ vertices can have, and this extremal problem was asymptotically solved by Thomason \cite{Th1}, where the components of the extremal graphs are pseudorandom of a certain density and order. One might expect Thomason's techniques to extend to the case $k$ is constant. For large $k$, the answer is quite different. The average size of the cliques in the complement of a perfect matching of size $x$ is $2x/3$, and a random clique in this graph typically has about this size. Thus, for $k=4t/9$, the graph which is a complement of a perfect matching of size just less than $2t/3$ is $K_t$-minor free and has nearly the maximum number of $K_k$, namely $3^{2t/3-o(t)}n$. For $k=t-1$ and $n \geq t-1$, Wood \cite{Wo1} shows that the maximum number of $K_k$ is $n-t+2$. We therefore see that depending on the range of $k$, the answer has quite different forms. 

In another paper \cite{FW}, we prove similar results for forbidden immersions or subdivisions. We show that any $n$-vertex graph with no $K_t$-immersion has at most $2^{t+o(t)}n$ cliques, which is tight by the disjoint union of cliques on $t-1$ vertices. We also improve the exponential constant on the number of cliques in a $K_t$-subdivision graph on $n$ vertices but did not determine it. We conjecture that the maximum number of cliques in $K_t$-subdivision free graph on $n$ vertices is $3^{2t/3 +o(t)}n$ as it is when we replace subdivision by minor.


\begin{thebibliography}{}


\bibitem{Dir} G. A. Dirac, Some theorems on abstract graphs, {\it Proc. London Math. Soc.} {\bf 2} (1952), 69--81. 

\bibitem{DKT} Z. Dvo\v r\'ak, D. Kr\'al', and R. Thomas, Testing first-order properties for subclasses of sparse graphs, {\it J. ACM} {\bf 60} (2013), Art. 36, 24 pp. 

\bibitem{FOT} F. V. Fomin, S. Oum, and D. M. Thilikos, Rank-width and tree-width of H-minor-free graphs, {\it European J. Combin.} {\bf 31} (2010), 1617--1628. 

\bibitem{FW} 
J. Fox and F. Wei, On the number of cliques in graphs with a forbidden immersion or subdivision, in preparation. 

\bibitem{KW} D. J. Kleitman and K. J. Winston, On the number of graphs without $4$-cycles, {\it Discrete Math.} {\bf  41} (1982), 167--172.

\bibitem{Ko} A. V. Kostochka, The minimum Hadwiger number for graphs with a given mean degree of vertices, {\it Metody Diskret. Analiz.} {\bf 38} (1982), 37--58.

\bibitem{Ko1} A. V. Kostochka, A lower bound for the Hadwiger number of graphs by their average degree, {\it Combinatorica} {\bf 4} (1984), 307--316. 

\bibitem{LO} C. Lee and S. Oum, Number of cliques in graphs with forbidden subdivision, {\it SIAM J. Discrete Math.} {\bf 29} (2015), 1999--2005.

\bibitem{Ma} W. Mader, Homomorphieeigenschaften und mittlere Kantendichte von Graphen, {\it Math.
Ann.} {\bf 174} (1967), 265---268.

\bibitem{Ma1} W. Mader, Homomorphies\"atze f\"ur Graphen, {\it Math. Ann.} {\bf 178} (1968), 154--168.

\bibitem{NSTW} S. Norine, P.  Seymour, R. Thomas, and P.  Wollan, Proper minor-closed families are small, {\it J. Combin. Theory Ser. B} {\bf 96} (2006), 754--757.

\bibitem{RW} B. Reed and D. R. Wood, A linear-time algorithm to find a separator in a graph excluding a minor. {\it ACM Trans. Algorithms} {\bf 5} (2009) Art. 39, 16 pages.

\bibitem{RS} N. Robertson and P. D. Seymour, Graph minors. XX. Wagner's conjecture, {\it J. Combin. Theory Ser. B} {\bf 92} (2004), 325--357.

\bibitem{Th} A. Thomason, An extremal function for contractions of graphs, {\it Math. Proc. Cambridge Philos. Soc.} {\bf 95} (1984), 261--265.

\bibitem{Th1} A. Thomason, The extremal function for complete minors, {\it J. Combin. Theory Ser. B} {\bf 81} (2001), 318--338.

\bibitem{Tu} P. Tur\'an, On an extremal problem in graph theory, {\it Mat. Fiz. Lapok} {\bf 48} (1941), 436--452.

\bibitem{Wo} D. R. Wood, On the maximum number of cliques in a graph, {\it Graphs Combin.} {\bf 23} (2007), 337--352.

\bibitem{Wo1} D. R. Wood, Cliques in graphs excluding a complete graph minor, preprint, arXiv:1511.04655.

\bibitem{Zy} A. A. Zykov, On some properties of linear complexes, {\it Mat. Sbornik N.S.} {\bf  24} (1949), 163--188.

\end{thebibliography}
\end{document}